\documentclass[12pt,reqno]{amsart}
\usepackage{fullpage}
\usepackage{amsmath,amssymb,amsthm}
\usepackage{mathtools}
\usepackage{booktabs}
\usepackage{tikz}
\usepackage{colonequals}
\usetikzlibrary{arrows.meta,calc,positioning}
\usepackage[colorlinks=true,linkcolor=blue,citecolor=blue,urlcolor=blue]{hyperref}

\newtheorem{thm}{Theorem}[section]
\newtheorem{lem}[thm]{Lemma}

\newtheorem{cor}[thm]{Corollary}

\theoremstyle{definition}
\newtheorem{defn}[thm]{Definition}
\newtheorem{rmk}[thm]{Remark}
\newtheorem{ex}[thm]{Example}
\newtheorem{quest}[thm]{Question}

\title{Defective chromatic polynomials}
\author{Shamil Asgarli}
\address{Department of Mathematics \& Computer Science \\ Santa Clara University \\ CA 95053} \email{sasgarli@scu.edu}

\author{Tamsen Whitehead McGinley}
\address{Department of Mathematics \& Computer Science \\ Santa Clara University \\ CA 95053} \email{tmcginley@scu.edu}

\author{Nicholas Xue}
\address{Department of Mathematics \& Computer Science \\ Santa Clara University \\ CA 95053} \email{nxue@scu.edu}

\subjclass[2020]{Primary: 05C15, 05C31; Secondary: 05C05, 05C60}
\keywords{defective coloring, defective chromatic polynomial, degree chromatic polynomial, trees, matching polynomial, cospectral graph, counting linear forests}

\begin{document}

\begin{abstract}
For a graph $G$ and an integer $d\geq 0$, the defective chromatic polynomial $\chi_d(G;k)$ counts the $k$-colorings of $G$ in which each vertex has at most $d$ neighbors of its own color. We investigate which structural properties of $G$ are determined by the full family $\{\chi_d(G;k)\}_{d\geq 0}$. We establish a contraction formula expressing $\chi_d(G;k)$ as a sum of ordinary chromatic polynomials of the edge contractions of $G$. As a first application, we prove that for triangle-free graphs, the full family determines the degree sequence. For trees, we show further that the family $\{\chi_d(T;k)\}_{d\geq 0}$ determines the path-subgraph counts $N(P_j,T)$ for $j=1,2,3,4$, but \emph{not} for $j=5$. For each $n\geq 9$, we construct a pair of nonisomorphic trees of order $n$ that share the same defective chromatic polynomials for every $d\geq 0$.
\end{abstract}

\maketitle

\section{Introduction}

Let $G$ be a finite simple graph. A \emph{$(k,d)$-coloring} of $G$ is a map
\[
c\colon V(G)\to [k]\colonequals \{1,2,\dots,k\}
\]
such that every vertex $v\in V(G)$ has at most $d$ neighbors $w\in N_G(v)$ with $c(w)=c(v)$. Equivalently, the color classes induce subgraphs of maximum degree at most~$d$.

\begin{defn}
For a graph $G$ and an integer $d\geq 0$, we write $\chi_d(G;k)$ for the number of $(k,d)$-colorings of $G$ and call it the \emph{defective chromatic polynomial} of $G$.
\end{defn}

Two extreme cases are immediate: setting $d=0$ recovers the ordinary chromatic polynomial $\chi_0(G;k)=\chi(G;k)$, while setting $d\geq \Delta(G)$ imposes no restriction, yielding $\chi_d(G;k)=k^{|V(G)|}$. The intermediate range, where the new phenomena arise, is $1\leq d\leq \Delta(G)-1$.

The name \emph{degree chromatic polynomial} also appears in the literature, under a slightly different indexing convention. This terminology is due to Humpert and Martin \cite{HM12}, who introduced the polynomial as part of a Hopf-algebraic framework. A conjecture of Humpert and Martin on the leading terms for trees was later proved by Cifuentes \cite{Cif11, Cif12}. In their notation, $P_m(G,k)$ counts the $k$-colorings in which every vertex has fewer than $m$ neighbors of its own color. The two definitions are related by $\chi_d(G;k)=P_{d+1}(G,k)$. Either way, the abbreviation DCP is consistent with both names.

Defective colorings belong to the broader theory of improper colorings and $(m,k)$-colorings; see Cowen, Cowen, and Woodall \cite{CCW86}, Frick \cite{Fri93}, and Cowen, Goddard, and Jesurum \cite{CGJ97}. The defective chromatic polynomial is a special case of the Harary polynomial of Herscovici, Makowsky, and Rakita \cite{HMR21}. For background on chromatic polynomials, see Read \cite{Rea68}.

The defective chromatic polynomial should be distinguished from the \emph{$q$-defect polynomial}, which counts colorings with exactly $q$ monochromatic edges. That invariant was studied by Mphako-Banda \cite{M-B19}; for a broader discussion of related graph polynomials, see Ellis-Monaghan and Merino \cite{E-MM11}. Both invariants measure how far a coloring is from being proper: the $q$-defect polynomial counts the total number of monochromatic edges, while $\chi_d(G;k)$ bounds the monochromatic degree at each vertex.

The ordinary chromatic polynomial is well known to be an incomplete invariant of graphs, and even of trees. Allowing a defect parameter produces a family of invariants, and it is natural to ask what additional information this family carries:

\begin{quest}
Which properties of a graph $G$ are determined by the full family
\[
\{\chi_d(G;k): d\geq 0\}
\]
of its defective chromatic polynomials?
\end{quest}

Our first result is that for triangle-free graphs, the full defective family determines the degree sequence.

\begin{thm}\label{thm:intro-degree-sequence}
Let $G$ be a triangle-free graph. Then the full family $\{\chi_d(G;k): d\geq 0\}$ determines the degree sequence of $G$.
\end{thm}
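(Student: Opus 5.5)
The plan is to read off the degree sequence from one coefficient of each $\chi_d(G;k)$, via the standard expansion of a coloring count over set partitions. Every $k$-coloring $c$ induces a partition $\pi$ of $V(G)$ into its nonempty color classes. Conversely, a partition with $|\pi|$ blocks arises from exactly $(k)_{|\pi|}=k(k-1)\cdots(k-|\pi|+1)$ colorings. Call a partition \emph{$d$-good} if every block induces a subgraph of maximum degree at most $d$. Then
\[
\chi_d(G;k)=\sum_{\pi\ \text{$d$-good}} (k)_{|\pi|},
\qquad
k^{|V(G)|}-\chi_d(G;k)=\sum_{\pi\ \text{not $d$-good}} (k)_{|\pi|},
\]
where the second identity uses $k^n=\sum_{\pi}(k)_{|\pi|}$ over all partitions. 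Here $n=|V(G)|$ is known, since it is the degree of $\chi_d(G;k)$.

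The key step is to extract the coefficient of $k^{n-d-1}$ in $k^n-\chi_d(G;k)$ for each $d\ge 1$. Since $(k)_m$ is monic of degree $m$, this coefficient is the number of partitions $\pi$ that are not $d$-good and have exactly $n-d-1$ blocks. There are no bad partitions with more blocks. Indeed, a bad partition has a block containing a vertex with at least $d+1$ neighbours inside it, so that block has size at least $d+2$, and hence $|\pi|\le n-d-1$.

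Equality $|\pi|=n-d-1$ forces exactly one block $B$ of size $d+2$, with all other blocks singletons. Moreover, some vertex $v\in B$ must be adjacent to all $d+1$ other vertices of $B$. Triangle-freeness now enters: the other vertices are pairwise nonadjacent, so $B$ induces exactly the star $K_{1,d+1}$. For $d\ge 1$ the center $v$ is unique, since every other vertex of $B$ has degree $1$ in $G[B]$. Such blocks are therefore in bijection with pairs $(v,S)$ where $S\subseteq N(v)$ and $|S|=d+1$. Hence the coefficient of $k^{n-d-1}$ in $k^n-\chi_d(G;k)$ equals
\[
S_{d+1}\colonequals\sum_{v\in V(G)}\binom{\deg v}{d+1}\qquad(d\ge 1).
\]
Also $S_1=2|E(G)|$ is read from $\chi_0$, since $|E(G)|$ is minus the coefficient of $k^{n-1}$ in the chromatic polynomial, and $S_0=n$.

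Finally, let $N_t$ be the number of vertices of degree $t$. Then $S_j=\sum_{t\ge j}N_t\binom{t}{j}$ for $j=0,1,\dots,n-1$. This system is upper unitriangular, so it can be inverted from the top down: $N_{n-1}=S_{n-1}$, then $N_{n-2}$, and so on. Explicitly, $N_t=\sum_{j\ge t}(-1)^{j-t}\binom{j}{t}S_j$. Thus the family $\{\chi_d(G;k)\}_{d\ge0}$ determines all $N_t$, which is the degree sequence.

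The main point needing care is the leading-coefficient count. One must confirm that no bad partition with fewer blocks contributes to $k^{n-d-1}$. It does not, because $(k)_m$ has degree $m<n-d-1$. One must also confirm that triangle-freeness makes the bad block exactly an induced star with a unique center. Without triangle-freeness, blocks such as $K_{d+2}$ minus a few edges would have several possible centers and corrupt the count.
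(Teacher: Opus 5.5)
Your proof is correct. It reaches the same coefficient identity as the paper, but by a different and more direct route.

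The paper works from its contraction formula (Theorem~\ref{thm:general-contraction}) in two steps:
\begin{enumerate}
\item It shows $k^n-\chi_d(G;k)=O(k^{n-d-1})$ by contracting a component of $G[A]$ with at least $d+2$ vertices.
\item It applies inclusion--exclusion to the events $R(v,U)$ that a coloring is constant on $\{v\}\cup U$ with $U\in\binom{N_G(v)}{d}$. Triangle-freeness is used only in Lemma~\ref{lem:pairwise-intersections}, to rule out $Q(v,U)=Q(w,W)$ with $v\neq w$, so that all pairwise intersections are of lower order.
\end{enumerate}
This shows that the coefficient of $k^{n-d}$ in $k^n-\chi_{d-1}(G;k)$ is $\sum_v\binom{\deg v}{d}$, which is your $S_d$ after an index shift. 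The paper then peels off $M_\Delta,M_{\Delta-1},\dots,M_2$ one at a time, and recovers $M_1,M_0$ from $n$ and $2m$.

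Your expansion over color-class partitions replaces both the degree bound and the inclusion--exclusion estimate with one exact statement. The relevant coefficient is the number of bad partitions with the maximal number $n-d-1$ of blocks, and these are visibly the induced stars $K_{1,d+1}$.

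Your uniqueness-of-center observation is the same combinatorial fact as the paper's exclusion of $Q_1=Q_2$. In your form, however, there are no error terms to control. It also shows what the coefficient counts for an arbitrary graph, namely the $(d+2)$-sets containing a vertex adjacent to all the others. That makes plain exactly where triangle-freeness is needed. Your closed-form binomial inversion is equivalent to the paper's top-down recursion.

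What the paper's route buys is economy within the paper: it reuses the contraction formula it needs anyway for the tree results in Sections~\ref{sec:paths} and~\ref{sec:infinite}. Your argument is more self-contained and arguably cleaner for this theorem on its own.
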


For trees, we prove more: the defective polynomials determine the counts of short paths. Let $N(H, G)$ denote the number of subgraphs of $G$ isomorphic to $H$.

\begin{thm}\label{thm:intro-path-counts}
Let $T_1$ and $T_2$ be two trees with $\chi_d(T_1;k)=\chi_d(T_2;k)$ for every $d\geq 0$. Then
\[
N(P_j,T_1)=N(P_j,T_2)\qquad\text{for each }j\in\{1,2,3,4\}.
\]
\end{thm}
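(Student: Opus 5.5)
The plan is to turn the whole family $\{\chi_d(T;k)\}_{d\ge 0}$ into counts of edge subsets of $T$ with bounded maximum degree, and then read off the path counts by an inclusion–exclusion over the few possible shapes of small edge sets. Throughout, $P_j$ is the path on $j$ vertices. The first two counts are immediate: $n=|V(T)|=\deg_k \chi_d(T;k)$, and $N(P_2,T)=n-1$.

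\textbf{Step 1: an edge-subset expansion.} Any $k$-coloring $c$ of $T$ determines its set $F\subseteq E(T)$ of monochromatic edges. The coloring $c$ is a $(k,d)$-coloring exactly when the spanning subgraph $(V,F)$ has maximum degree at most $d$. Fix $F$ and count the colorings whose monochromatic set is \emph{exactly} $F$. Contracting $F$ in the tree $T$ gives a tree on $n-|F|$ vertices. Such colorings correspond to the proper colorings of this contracted tree, and there are $k(k-1)^{n-1-|F|}$ of them. This is the special case of the paper's contraction formula for trees. Summing over $F$ gives
\[
\chi_d(T;k)=k\sum_{m\ge 0} a_{d}(m)\,(k-1)^{n-1-m},
\]
where $a_d(m)$ is the number of $m$-edge subsets $F$ with $\Delta(V,F)\le d$. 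The polynomials $(k-1)^{j}$ are linearly independent. Hence the family determines every $a_d(m)$, and therefore the number $b_D(m)=a_D(m)-a_{D-1}(m)$ of $m$-edge subsets of maximum degree exactly $D$.

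\textbf{Step 2: $N(P_3)$.} A $2$-edge subset either is a matching or forms a $P_3$. Therefore $N(P_3,T)=b_2(2)$ is determined.

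\textbf{Step 3: $N(P_4)$.} A $3$-edge subset of a tree has one of four shapes: a $3$-matching, $P_3\cup K_2$, $P_4$, or $K_{1,3}$. The maximum degrees are $1,2,2,3$ respectively. So Step 1 yields two quantities:
\[
N(K_{1,3},T)=b_3(3),\qquad N(P_4,T)+N(P_3\cup K_2,T)=b_2(3).
\]
To separate $N(P_4)$ from $N(P_3\cup K_2)$, double count pairs $(Q,e)$, where $Q$ is a copy of $P_3$ and $e\notin E(Q)$. Then $Q\cup\{e\}$ is one of the following:
\begin{itemize}
\item a $P_3\cup K_2$, which contains exactly one $P_3$;
\item a $P_4$, which contains exactly two $P_3$'s;
\item a $K_{1,3}$, which contains exactly three $P_3$'s.
\end{itemize}
It cannot be a triangle, since $T$ is a tree. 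Hence
\[
N(P_3,T)\,(n-3)=N(P_3\cup K_2,T)+2N(P_4,T)+3N(K_{1,3},T).
\]
Together with the two relations above, this is a linear system whose solution gives $N(P_4,T)$ in terms of quantities already shown to be determined.

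\textbf{Main obstacle.} The delicate point is Step 1: justifying that, in a tree, the colorings with monochromatic set exactly $F$ number $k(k-1)^{n-1-|F|}$, independently of the shape of $F$. This relies on acyclicity. Contracting a forest inside a tree yields a tree, and no non-monochromatic edge can have its endpoints merged. Everything after Step 1 is elementary counting. If the paper's contraction formula is stated with signs (as a sum over contractions with Möbius-type coefficients), the same conclusion can be reached by inverting it. However, the direct argument above is the route I would take first.
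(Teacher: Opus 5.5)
Your proof is correct. Steps 1--2 coincide with the paper's argument: Corollary~\ref{cor:tree-expansion}, Remark~\ref{rmk:extraction}, and $N(P_3,T)=c_{2,2}(T)$. The ``main obstacle'' you flag is handled exactly as you describe.

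Step 3 takes a genuinely different route. The paper first invokes the degree-sequence result (Corollary~\ref{cor:trees-degree-sequence}), which rests on the asymptotic inclusion--exclusion of Section~\ref{sec:degree-sequence}. It then double counts \emph{vertex-disjoint} pairs $(P_3,P_2)$. The number of edges avoiding a given $P_3$ $y$--$x$--$z$ is $n+1-d_x-d_y-d_z$, which brings in the second Zagreb index $Z_2$. That index must then be eliminated using $N(P_4,T)=Z_2(T)-\sum_v d_v^2+(n-1)$.

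You instead pair a $P_3$ with \emph{any} further edge, so the total is trivially $(n-3)N(P_3,T)$. The only extra information needed is the star count $N(K_{1,3},T)=c_{3,3}(T)$. Solving your system gives
\[
N(P_4,T)=(n-3)\,c_{2,2}(T)-c_{3,2}(T)-3\,c_{3,3}(T).
\]
This agrees with Lemma~\ref{lem:P4-count} via the identities $\sum_x d_x\binom{d_x}{2}=3\sum_x\binom{d_x}{3}+2\sum_x\binom{d_x}{2}$ and $\sum_x\binom{d_x}{3}=c_{3,3}(T)$.

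Your argument is therefore self-contained and more elementary. It uses only $n$ and the three coefficients $c_{2,2},c_{3,2},c_{3,3}$, with no appeal to Theorem~\ref{thm:intro-degree-sequence}. The paper's route buys an explicit link to $Z_2$ (Corollary~\ref{cor:zagreb-dcp}). That link also follows from your formula, since $\sum_v d_v^2=2c_{2,2}(T)+2(n-1)$.
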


On the other hand, we exhibit such trees for which $N(P_5,T_1)\neq N(P_5,T_2)$, showing that Theorem~\ref{thm:intro-path-counts} is sharp. In particular, the full defective family is not a complete invariant, even for trees; in fact, infinitely many pairs of trees witness this failure.

\begin{thm}\label{thm:intro-infinite}
There exist infinitely many pairs of nonisomorphic trees having the same full family of defective chromatic polynomials.
\end{thm}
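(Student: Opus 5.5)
We will prove Theorem~\ref{thm:intro-infinite} by an explicit infinite family: for each $n\geq 9$ we exhibit nonisomorphic trees $T_n, T_n'$ on $n$ vertices with $\chi_d(T_n;k)=\chi_d(T_n';k)$ for all $d\ge 0$.

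The first step is a description of $\chi_d(T;k)$ that can be compared between two trees. Every $k$-coloring determines the spanning subgraph $F$ of monochromatic edges, and $c$ is a $(k,d)$-coloring exactly when $\Delta(F)\le d$. For a tree, contracting the edges of $F$ gives a tree with $n-|F|$ vertices, whose proper colorings number $k(k-1)^{n-|F|-1}$. Inclusion--exclusion (this is the content of the contraction formula mentioned in the abstract) then gives
\[
\chi_d(T;k)=\sum_{F\subseteq E(T)} a_d(F)\,k(k-1)^{n-1-|F|},
\]
where the coefficient $a_d(F)=\sum_{F'\subseteq F,\ \Delta(F')\le d}(-1)^{|F|-|F'|}$ depends only on the isomorphism type of the forest $F$. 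So it suffices to arrange that, for every forest type $H$ (with isolated vertices ignored), the number of edge subsets of $T_n$ forming a copy of $H$ equals the number for $T_n'$. Equivalently, we need $N(H,T_n)=N(H,T_n')$ for all forests $H$.

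The second step is to realize this condition through a gluing construction. Take a small seed pair of rooted trees $(A,a)$ and $(B,b)$ such that, for every forest $H$, the subgraph counts agree, and also the counts refined by the degree of the root within the copy agree. A natural source is a pair of trees in which each is obtained by attaching the same branch at two "similar" vertices of a core tree, as in Schwenk-type constructions for cospectral or chromatically equivalent trees. Then form $T_n$ by gluing a path (or any fixed rooted tree) of suitable length at $a$, and $T_n'$ by gluing the same thing at $b$. Subgraph counts of a one-vertex gluing split over how the copy meets the root, with the root degree recorded. Hence the refined equality for the seed propagates to every $n\ge 9$. Nonisomorphism will be checked by an invariant such as $N(P_5,\cdot)$, which by the sharpness remark differs, or simply by comparing the degree sequences of neighbors.

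The main obstacle is finding the seed pair on $9$ vertices with refined root-degree-matching subgraph counts. I would search for it directly or guess it from the known $9$-vertex example. I would then verify the required identities by computing the generating polynomial $\sum_F x^{|F|}\prod_v y_{\deg_F v}$ rooted at the distinguished vertex, and check that it matches for both rooted trees. If full forest-count equality is too strong, the fallback is to work with the weaker statement that only $\sum_F a_d(F)$, grouped by $|F|$, must agree. That equality can be verified through the matching-polynomial-like recursion at the root.
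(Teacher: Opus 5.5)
Your second step aims at a target that no valid pair can meet. You reduce the problem to $N(H,T_n)=N(H,T_n')$ for every forest $H$ (counted as edge subsets) and call this equivalent. It is only sufficient, and no two nonisomorphic trees of the same order satisfy it. For $H=T_n$ the counts are $1$ and $0$, because the unique $(n-1)$-edge subset of each tree is its full edge set.

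So the seed pair you call the main obstacle cannot exist; in particular the $9$-vertex pair cannot serve. The only escape is two rootings of a single tree, where the unrooted counts agree trivially and everything rests on the rooted refinement. That refinement is also mishandled. After a one-vertex gluing, the isomorphism type of the component through the root depends on the rooted type of that component in the seed, not merely on the root's degree. So your propagation claim is false as stated.

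A smaller slip in Step~1: for the exact monochromatic edge set, the coefficient of $k(k-1)^{n-1-|F|}$ is simply $1$ if $\Delta(F)\le d$ and $0$ otherwise, as in Corollary~\ref{cor:tree-expansion}. Your alternating sum $a_d(F)$ is the coefficient of $k^{n-|F|}$ instead; for $d\ge\Delta(T)$ your formula returns $k(k-1)^{n-1}$ rather than $k^n$.

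What is actually needed is only $c_{r,\le d}(T_n)=c_{r,\le d}(T_n')$ for all $r,d$, which is your fallback. But there you supply neither a pair nor a verification, and that is the whole content of the theorem. The paper provides both:
\begin{itemize}
\item the trees $X_a,Y_a$, obtained by attaching a path at $z_1$, respectively $t_1$, of one caterpillar $C$;
\item maximum degree $3$, so only $d\in\{1,2\}$ matter;
\item $d=2$ by inclusion--exclusion over the three degree-$3$ vertices, whose intersection pattern (one adjacent pair, two nonadjacent pairs) is the same in both trees;
\item $d=1$ via Schwenk's lemma, using $C-z_1\cong C-t_1$, together with the fact that a tree's characteristic polynomial equals its matching polynomial.
\end{itemize}

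Your gluing idea can be repaired with the right bookkeeping. For a rooted tree $(A,a)$, count the $r$-edge subsets in which $a$ has degree $j$ and every other vertex has degree at most $d$. Since the defect condition is local, equality of these refined counts for two rooted seeds survives gluing any rooted tree at the root; nonisomorphism must still be checked separately. The seeds $(C,z_1)$ and $(C,t_1)$ satisfy this:
\begin{itemize}
\item trivially for $d\ge 3$;
\item for $d=2$, because in both cases one root edge meets the nearer degree-$3$ vertex, the other meets none, and the three edges at the farther degree-$3$ vertex avoid all of these;
\item for $d=1$, because the relevant counts are the numbers of $r$-matchings of $C-a$ and of $C$ minus those of $C-a$, with $C-z_1\cong C-t_1$.
\end{itemize}
None of this appears in your proposal, so as it stands it does not prove the theorem.
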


In the final section, we show that the DCP family is incomparable with both the two-variable polynomial of Dohmen, P\"onitz, and Tittmann \cite{DPT03} and the generalized degree polynomial of Crew \cite{Cre20}.

\medskip
\noindent\textbf{Organization.}
Section~\ref{sec:contraction} establishes a formula for $\chi_d(G;k)$ as a sum of ordinary chromatic polynomials of contracted graphs. Theorems~\ref{thm:intro-degree-sequence} and~\ref{thm:intro-path-counts} are proved in Sections~\ref{sec:degree-sequence} and~\ref{sec:paths}, respectively. In Section~\ref{sec:infinite}, we prove Theorem~\ref{thm:intro-infinite} by constructing an explicit infinite family of trees. Finally, Section~\ref{sec:comparison} compares DCP with several other graph polynomials on trees.

\section{A contraction formula}\label{sec:contraction}

Every coloring of $G$ has a naturally associated set of \emph{monochromatic edges}. If this edge set has maximum degree at most $d$, the coloring descends to a proper coloring of the graph obtained by contracting these edges. Summing over all such edge sets yields a formula for $\chi_d(G;k)$ in terms of ordinary chromatic polynomials.

For $A\subseteq E(G)$, write $G[A]$ for the spanning subgraph of $G$ with edge set $A$, and $G/A$ for the graph obtained by simultaneously contracting every edge in $A$. We regard $G/A$ as a multigraph, allowing loops and parallel edges. The chromatic polynomial is defined as usual; in particular, a loop forces it to vanish, while parallel edges have no effect on proper colorings.

\begin{thm}\label{thm:general-contraction}
Let $G$ be a graph and let $d\geq 0$. Then
\[
\chi_d(G;k)=\sum_{\substack{A\subseteq E(G)\\ \Delta(G[A])\leq d}} \chi_0(G/A;k).
\]
\end{thm}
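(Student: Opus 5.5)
We will prove the identity bijectively: partition the set of $(k,d)$-colorings of $G$ according to their set of monochromatic edges, and identify each block of the partition with the set of proper colorings of a contraction. For a coloring $c\colon V(G)\to[k]$ put
\[
M(c)\colonequals\{uv\in E(G): c(u)=c(v)\}.
\]
By definition, $c$ is a $(k,d)$-coloring exactly when every vertex has at most $d$ neighbors of its own color. In other words, $\Delta(G[M(c)])\leq d$. Therefore the $(k,d)$-colorings are the disjoint union, over all $A\subseteq E(G)$ with $\Delta(G[A])\leq d$, of the sets
\[
\mathcal{C}_A\colonequals\{c: M(c)=A\}.
\]
It then suffices to show that $|\mathcal{C}_A|=\chi_0(G/A;k)$ for every $A\subseteq E(G)$. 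The degree condition plays no role in this step.

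To prove $|\mathcal{C}_A|=\chi_0(G/A;k)$, recall that the vertices of $G/A$ are the connected components of $G[A]$. Each edge $uv\in E(G)\setminus A$ becomes an edge joining the components containing $u$ and $v$. This edge is a loop when $u$ and $v$ lie in the same component, and several edges of $E(G)\setminus A$ may produce parallel edges.

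Given $c\in\mathcal{C}_A$, every edge of $A$ is monochromatic, so $c$ is constant on each component of $G[A]$ and descends to a coloring $\bar c$ of $V(G/A)$. Every edge $uv\notin A$ satisfies $c(u)\neq c(v)$, so $\bar c$ is proper on each edge of $G/A$. In particular, if some edge of $E(G)\setminus A$ has both ends in one component, then $\mathcal{C}_A=\emptyset$, which agrees with the convention that a loop makes $\chi_0$ vanish.

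Conversely, let $\bar c$ be a proper coloring of $G/A$. Lift it to $c$ by giving each vertex of $G$ the color of its component. Then every edge of $A$ is monochromatic, and every edge outside $A$ joins two distinct components that are adjacent in $G/A$, so it receives two different colors. Hence $M(c)=A$. The two maps $c\mapsto\bar c$ and $\bar c\mapsto c$ are mutually inverse, and parallel edges impose nothing beyond a single edge, so $|\mathcal{C}_A|=\chi_0(G/A;k)$.

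Summing over the admissible sets $A$ gives the formula in Theorem~\ref{thm:general-contraction}. No step is a real obstacle. The only point needing care is the multigraph convention. A set $A$ that is not ``closed'' (some edge outside $A$ has both ends in one component of $G[A]$) must contribute $0$, and it does, because of the loop in $G/A$. Parallel edges must not be double counted, and they are not, because a proper coloring only needs the two endpoints of each edge to differ.
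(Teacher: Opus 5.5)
Your proposal is correct and follows essentially the same argument as the paper: you partition the $(k,d)$-colorings by their monochromatic edge set and identify each block with the proper colorings of $G/A$, with loops forcing a zero contribution. The only difference is organizational: you prove $|\mathcal{C}_A|=\chi_0(G/A;k)$ for every $A$, independent of the degree condition, which is a slightly cleaner way to set up the same proof.
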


\begin{proof}
Given a $(k,d)$-coloring $c$ of $G$, let
\[
A_c\colonequals \{uv\in E(G): c(u)=c(v)\}
\]
be the set of monochromatic edges. Since the number of monochromatic neighbors of a vertex $v$ equals its degree in $G[A_c]$, the defect condition on $c$ is equivalent to $\Delta(G[A_c])\leq d$. Contracting $A_c$, we see that $c$ descends to a coloring of $G/A_c$, since every edge of $A_c$ is monochromatic. No loop arises in $G/A_c$: if an edge $uv\notin A_c$ had both endpoints in the same connected component of $G[A_c]$, then $u$ and $v$ would share a color, forcing $uv\in A_c$, a contradiction. Hence the induced coloring on $G/A_c$ is proper.

Conversely, given $A\subseteq E(G)$ with $\Delta(G[A])\leq d$, each proper coloring of $G/A$ lifts uniquely to a $(k,d)$-coloring of $G$ with $A_c=A$. The lift assigns each vertex $v\in V(G)$ the color of the vertex of $G/A$ containing~$v$. Every edge of~$A$ is then monochromatic by construction. If $uv\notin A$ and $u,v$ were identified by the contraction, $G/A$ would contain a loop, contradicting the existence of a proper coloring; otherwise $u$ and $v$ correspond to distinct adjacent vertices of $G/A$ and receive different colors. Hence no edge outside $A$ is monochromatic.

Thus, the $(k, d)$-colorings of $G$ are partitioned by their monochromatic edge set $A=A_{c}$, and for each admissible $A$, the contribution is $\chi_0(G/A;k)$.
\end{proof}

In particular, since each $\chi_0(G/A;k)$ is a polynomial in $k$, the formula shows that $\chi_d(G;k)$ is a polynomial in $k$.

As a consistency check, consider the two extreme values of $d$. For $d=0$, the only admissible set is $A=\emptyset$, so the formula reduces to $\chi_0(G;k)$. For $d\geq \Delta(G)$, every $A\subseteq E(G)$ is admissible, and the formula partitions all $k^{|V(G)|}$ colorings by their monochromatic edge set.

When $G$ is a tree, contracting any edge set yields a smaller tree, and the sum simplifies.

\begin{cor}\label{cor:tree-expansion}
Let $T$ be a tree on $n$ vertices and let $d\geq 0$. For each $r\in \{0,1,\dots,n-1\}$, let
\[
c_{r,\leq d}(T)\colonequals \#\{A\subseteq E(T) : |A|=r,\ \Delta(T[A])\leq d\}.
\]
Then
\[
\chi_d(T;k)=\sum_{r=0}^{n-1} c_{r,\leq d}(T)\,k(k-1)^{n-r-1}.
\]
\end{cor}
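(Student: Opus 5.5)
We apply Theorem~\ref{thm:general-contraction} directly to $T$ and evaluate each summand $\chi_0(T/A;k)$ explicitly. The only inputs needed are two standard facts. First, contracting any edge set of a tree produces a tree with no loops or parallel edges. Second, every tree on $m$ vertices has chromatic polynomial $k(k-1)^{m-1}$.

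\textbf{Step 1: $T/A$ is a simple tree on $n-|A|$ vertices.} Fix $A\subseteq E(T)$ with $|A|=r$. The spanning forest $T[A]$ is acyclic, so it has exactly $n-r$ connected components. These components are precisely the vertices of $T/A$, so $T/A$ has $n-r$ vertices.

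\emph{No loops.} A loop would come from an edge $uv\notin A$ whose endpoints lie in the same component of $T[A]$. The $u$--$v$ path inside that component, together with the edge $uv$, would then form a cycle in $T$, which is impossible.

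\emph{No parallel edges.} Two edges $e\neq e'$ outside $A$ joining the same pair of components would likewise close a cycle in $T$, using paths inside those two components.

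\emph{Tree.} Thus $T/A$ is simple. Contraction preserves connectedness, and $T/A$ has $(n-1)-r$ edges on $n-r$ vertices. A connected graph with one fewer edge than vertices is a tree.

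\textbf{Step 2: chromatic polynomial of a tree.} A tree on $m$ vertices has $\chi_0=k(k-1)^{m-1}$. To see this, color a root in $k$ ways. Then color each remaining vertex, in BFS order, in $k-1$ ways, avoiding only its parent's color; this suffices since the parent is its only previously colored neighbor. Applied to $T/A$, this gives $\chi_0(T/A;k)=k(k-1)^{n-r-1}$.

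\textbf{Step 3: regroup the sum.} The summand in Theorem~\ref{thm:general-contraction} now depends only on $|A|$. Grouping admissible sets $A$ (those with $\Delta(T[A])\le d$) by $r=|A|$, each value of $r$ contributes exactly $c_{r,\le d}(T)$ identical terms. Since $r$ ranges over $0,\dots,n-1$, this yields the stated formula.

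I expect no real obstacle here. The only point needing care is Step 1, namely ruling out loops and parallel edges in $T/A$ via acyclicity. This matters because Theorem~\ref{thm:general-contraction} treats $G/A$ as a multigraph.
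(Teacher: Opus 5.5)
Your proposal is correct and follows the paper's proof: apply Theorem~\ref{thm:general-contraction}, observe that $T/A$ is a tree on $n-|A|$ vertices with chromatic polynomial $k(k-1)^{n-|A|-1}$, and group the terms by $r=|A|$. The only difference is that you spell out details the paper leaves implicit: ruling out loops and parallel edges via acyclicity, the edge count showing $T/A$ is a tree, and the BFS derivation of a tree's chromatic polynomial.
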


\begin{proof}
Every subset $A\subseteq E(T)$ gives a forest $T[A]$. Contracting $r=|A|$ of these edges produces a tree on $n-r$ vertices with chromatic polynomial $k(k-1)^{n-r-1}$. Grouping the terms in Theorem~\ref{thm:general-contraction} by $|A|$ gives the result.
\end{proof}

Since the polynomials $\{k(k-1)^{n-r-1}\}_{r=0}^{n-1}$ have distinct degrees, they are linearly independent, and so the coefficients $c_{r,\leq d}(T)$ are uniquely determined by $\chi_d(T;k)$.

\begin{rmk}\label{rmk:extraction}
For a tree $T$, define 
\[
c_{r,d}(T)\colonequals \#\{A\subseteq E(T) : |A|=r,\ \Delta(T[A])=d\}.
\]
Then
\[
c_{r,\leq d}(T)=\sum_{i=0}^d c_{r,i}(T)
\qquad\text{and}\qquad
c_{r,d}(T)=c_{r,\leq d}(T)-c_{r,\leq d-1}(T),
\]
where we adopt the convention $c_{r,\leq -1}(T)=0$. Hence $\{\chi_d(T;k)\}_{d\geq 0}$ determines both arrays $\{c_{r,\leq d}(T)\}$ and $\{c_{r,d}(T)\}$.
\end{rmk}

The coefficients $c_{r,\leq d}(T)$ also have familiar combinatorial meanings for small $d$: $c_{r,\leq 1}(T)$ counts the $r$-edge matchings of $T$, while $c_{r,\leq 2}(T)$ counts the $r$-edge linear forests of $T$. Both interpretations will play a central role in Section~\ref{sec:infinite}.

\section{Degree sequence reconstruction for triangle-free graphs}\label{sec:degree-sequence}

We now apply the contraction formula to recover the degree sequence of a triangle-free graph from its full defective family. The proof applies inclusion--exclusion to the colorings that violate the defect condition, combined with a degree bound (Lemma~\ref{lem:degree-bound}) and a bound on pairwise intersections (Lemma~\ref{lem:pairwise-intersections}). Only the second lemma uses the triangle-free hypothesis.

For a graph $G$, let $M_r(G)$ denote the number of vertices of degree $r$. If $A$ is a finite set and $t\geq 0$, then $\binom{A}{t}$ denotes the collection of all subsets of $A$ of size $t$.

\begin{lem}\label{lem:degree-bound}
Let $G$ be a graph on $n$ vertices and let $d\geq 0$. Then $k^n-\chi_d(G;k)$ is a polynomial in $k$ of degree at most $n-d-1$; equivalently,
\[
k^n-\chi_d(G;k)=O(k^{n-d-1}).
\]
\end{lem}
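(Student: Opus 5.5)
We will prove Lemma~\ref{lem:degree-bound} using the contraction formula of Theorem~\ref{thm:general-contraction}. The key observation is that $k^n$ is also a sum of the same form, namely the sum over \emph{all} subsets $A\subseteq E(G)$, as noted in the consistency check following that theorem. Subtracting the two expansions gives
\[
k^n-\chi_d(G;k)=\sum_{\substack{A\subseteq E(G)\\ \Delta(G[A])\geq d+1}} \chi_0(G/A;k).
\]
This exhibits the left-hand side as a polynomial in $k$. It remains to bound the degree of each summand.

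Fix an edge set $A$ with $\Delta(G[A])\geq d+1$. Then some vertex $v$ has at least $d+1$ incident edges in $A$. Its closed neighborhood in $G[A]$ therefore forms a connected subgraph on at least $d+2$ vertices. All of these vertices are identified in $G/A$.

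Consequently, the number of connected components of $G[A]$, which equals the number of vertices of $G/A$, is at most $n-(d+2)+1=n-d-1$. The chromatic polynomial of a loopless multigraph on $m$ vertices has degree exactly $m$. If $G/A$ has a loop, its chromatic polynomial is identically zero. In either case $\deg \chi_0(G/A;k)\leq n-d-1$.

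Summing finitely many polynomials of degree at most $n-d-1$ gives the claim. The main point to state carefully is the component count: a connected subgraph on $s$ vertices reduces the number of components from $n$ by at least $s-1$. Everything else follows directly from Theorem~\ref{thm:general-contraction}.

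Alternatively, one could argue directly by a union bound. Bad colorings are those where some vertex $v$ and some $(d+1)$-subset $S\subseteq N(v)$ are monochromatic with $v$. There are at most $n\binom{\Delta}{d+1}k^{n-d-1}$ such colorings, which gives the $O(k^{n-d-1})$ form. Polynomiality then comes from the contraction formula.
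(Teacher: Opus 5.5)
Your proposal is correct and is essentially the paper's own proof: you subtract the contraction-formula expansion of $\chi_d(G;k)$ from the expansion of $k^n$ over all $A\subseteq E(G)$, then note that any $A$ with $\Delta(G[A])\geq d+1$ has a component on at least $d+2$ vertices, so $\deg\chi_0(G/A;k)\leq n-d-1$. Your alternative union-bound argument is also valid, since each bad set $\{v\}\cup S$ has $d+2$ vertices and so is monochromatic in exactly $k^{n-d-1}$ colorings.
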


\begin{proof}
Applying Theorem~\ref{thm:general-contraction} with $d=\Delta(G)$ gives
\[
k^n=\sum_{A\subseteq E(G)} \chi_0(G/A;k),
\]
since every edge set is admissible in this case. Subtracting the terms with $\Delta(G[A])\leq d$ yields
\[
k^n-\chi_d(G;k)
=
\sum_{\substack{A\subseteq E(G)\\ \Delta(G[A])>d}} \chi_0(G/A;k).
\]
For each such $A$, the graph $G[A]$ has a vertex of degree at least $d+1$, so some connected component of $G[A]$ has at least $d+2$ vertices. Contracting that component lowers the vertex count by at least $d+1$, and hence
\[
\deg \chi_0(G/A;k)\leq n-d-1.
\]
Since the number of terms is independent of $k$, the sum itself has degree at most $n-d-1$.
\end{proof}

For $v\in V(G)$ and $U\in \binom{N_G(v)}{d}$, set
\[
Q(v,U)\colonequals \{v\}\cup U
\qquad\text{and}\qquad
R(v,U)\colonequals \{c\colon V(G)\to [k] : c \text{ is constant on } Q(v,U)\}.
\]
A coloring fails to be $(d-1)$-defective if and only if it lies in $R(v,U)$ for some such pair. The key step in the inclusion--exclusion argument below is bounding the pairwise intersections of these events. The next lemma shows that, in the triangle-free setting, all such intersections are already of lower order.

\begin{lem}\label{lem:pairwise-intersections}
Let $G$ be a triangle-free graph on $n$ vertices, and let $d\geq 2$. If $(v,U)\neq (w,W)$ are two pairs with $v,w\in V(G)$, $U\in \binom{N_G(v)}{d}$, and $W\in \binom{N_G(w)}{d}$, then
\[
|R(v,U)\cap R(w,W)|\leq k^{n-d-1}.
\]
\end{lem}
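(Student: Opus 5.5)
The plan is to count colorings that are constant on the union $Q(v,U)\cup Q(w,W)$. A coloring in $R(v,U)\cap R(w,W)$ is constant on $Q(v,U)$ and also constant on $Q(w,W)$. If the two sets meet, it is constant on their union. In every case, the number of such colorings is $k$ raised to the number of classes of the partition they induce on $V(G)$. If the sets are disjoint, this count is $k^{n-2d}\le k^{n-d-1}$ because $d\ge 1$; we have $d\ge2$ in any case. So the whole task is to show that when $Q(v,U)\cap Q(w,W)\neq\emptyset$, the union has at least $d+2$ vertices. Equivalently, we must show $Q(w,W)\not\subseteq Q(v,U)$ or the reverse. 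Since both sets have exactly $d+1$ elements, it suffices to show $Q(v,U)\neq Q(w,W)$. A union of size at least $d+2$ then gives at most $n-(d+2)+1=n-d-1$ classes, hence the bound $k^{n-d-1}$.

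To prove $Q(v,U)\ne Q(w,W)$, I will split into two cases.

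\emph{Case $v=w$.} Then $U\neq W$, and since $v\notin U\cup W$ (a vertex is not its own neighbor), we get $Q(v,U)\neq Q(v,W)$ directly.

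\emph{Case $v\neq w$.} This is where triangle-freeness enters. Suppose $Q:=Q(v,U)=Q(w,W)$. Then $w\in Q(v,U)$ with $w\ne v$, so $w\in U$ and $w$ is adjacent to $v$. Because $d\ge 2$, $U$ contains a vertex $u\neq w$. This $u$ lies in $Q(w,W)$ and $u\ne w$, so $u\in W\subseteq N_G(w)$. Then $v,w,u$ are pairwise adjacent and form a triangle, a contradiction.

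The only delicate point is the $v\ne w$ case, and it is exactly where both hypotheses $d\ge2$ and triangle-free are used. Everything else is bookkeeping: identifying constant-on-a-set colorings with colorings of the quotient partition, and noting $|Q(v,U)\cup Q(w,W)|\ge d+2$ whenever the two sets differ. That size bound holds whether or not they intersect, so the disjoint case could even be folded in.
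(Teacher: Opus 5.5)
Your proposal is correct and follows the paper's proof essentially step for step. The disjoint case gives $k^{n-2d}\le k^{n-d-1}$, otherwise the count is at most $k^{n-|Q(v,U)\cup Q(w,W)|+1}$, and $Q(v,U)=Q(w,W)$ is ruled out by the same $v=w$ versus $v\neq w$ split, where $d\geq 2$ and triangle-freeness produce the triangle on $v,w,u$.

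One small quibble concerns your closing remark that the disjoint case ``could be folded in.'' This does not follow from the size bound $|Q(v,U)\cup Q(w,W)|\geq d+2$ alone, because for disjoint sets the count is $k^{n-|Q(v,U)\cup Q(w,W)|+2}$. You would still need the exact size $2d+2$, which is your separate $d\geq 1$ observation. This does not affect the proof.
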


\begin{proof}
Write $Q_1=Q(v,U)$ and $Q_2=Q(w,W)$; both have size $d+1$. A coloring in $R(v,U)\cap R(w,W)$ is constant on $Q_1$ and on $Q_2$.

If $Q_1\cap Q_2=\emptyset$, then
\[
|R(v,U)\cap R(w,W)|=k^{n-2d}\leq k^{n-d-1},
\]
since $d\geq 1$.

Otherwise, every coloring in the intersection is constant on $Q_1\cup Q_2$, so
\[
|R(v,U)\cap R(w,W)|\leq k^{n-|Q_1\cup Q_2|+1}.
\]
Since $|Q_1|=|Q_2|=d+1$, the only way to have $|Q_1\cup Q_2|<d+2$ is $Q_1=Q_2$; we show this is impossible.

If $v=w$, then $Q_1=Q_2$ forces $U=W$, contradicting $(v,U)\neq(w,W)$. If $v\neq w$, then $Q_1=Q_2$ forces $w\in U$ and $v\in W$, so $vw\in E(G)$. Since $d\geq 2$, there exists $x\in U\setminus\{w\}$; because $Q_1=Q_2$ we also have $x\in W$, so $x$ is adjacent to both $v$ and $w$, yielding a triangle, a contradiction. Hence $|Q_1\cup Q_2|\geq d+2$, so
\[
|R(v,U)\cap R(w,W)|\leq k^{n-|Q_1\cup Q_2|+1}\leq k^{n-(d+2)+1}=k^{n-d-1},
\]
as desired.
\end{proof}

We now combine these two lemmas to prove Theorem~\ref{thm:intro-degree-sequence}.

\begin{proof}[Proof of Theorem~\ref{thm:intro-degree-sequence}]
Fix $d\geq 2$. A coloring fails to be $(d-1)$-defective if and only if some vertex $v$ has at least $d$ neighbors of its own color, that is, if and only if the coloring lies in $R(v,U)$ for some $v\in V(G)$ and some $U\in \binom{N_G(v)}{d}$. Hence
\begin{equation}\label{eq:big-union}
k^n-\chi_{d-1}(G;k)=\left|\bigcup_{\substack{v\in V(G)\\ U\in \binom{N_G(v)}{d}}} R(v,U)\right|.
\end{equation}
The first sum of inclusion--exclusion, applied to the right-hand side of \eqref{eq:big-union}, with $|R(v,U)|=k^{n-d}$, gives
\[
\sum_{v\in V(G)} \binom{\deg(v)}{d}k^{n-d}
=
\left(\sum_{r=d}^{\Delta} M_r(G)\binom{r}{d}\right)k^{n-d}.
\]
By Lemma~\ref{lem:pairwise-intersections}, every pairwise intersection is $O(k^{n-d-1})$. Every higher-order intersection is contained in a pairwise intersection, and the number of terms in the inclusion--exclusion expansion is independent of $k$. Hence
\[
k^n-\chi_{d-1}(G;k)
=
\left(\sum_{r=d}^{\Delta} M_r(G)\binom{r}{d}\right)k^{n-d}
+O(k^{n-d-1}).
\]
On the other hand, Lemma~\ref{lem:degree-bound} gives $k^n-\chi_d(G;k)=O(k^{n-d-1})$. Subtracting,
\[
\chi_d(G;k)-\chi_{d-1}(G;k)
=
\left(\sum_{r=d}^{\Delta} M_r(G)\binom{r}{d}\right)k^{n-d}
+O(k^{n-d-1}).
\]
The maximum degree $\Delta=\Delta(G)$ is determined as the least integer $d\geq 0$ for which $\chi_d(G;k)=k^n$. Assume $\Delta\geq 2$; the case $\Delta\leq 1$ will be handled at the end.

We now recover the degree sequence. For $d=\Delta$, the leading coefficient is $M_\Delta(G)$. For $d=\Delta-1$ the leading coefficient is
\[
M_\Delta(G)\binom{\Delta}{\Delta-1}+M_{\Delta-1}(G),
\]
so $M_{\Delta-1}(G)$ is determined. Iterating this argument down to $d=2$, we determine $M_\Delta(G)$, $M_{\Delta-1}(G)$, $\dots$, $M_2(G)$ in turn. 

It remains to find $M_1(G)$ and $M_0(G)$. The vertex count $n$ is the degree of $\chi_0(G;k)$. Let $m$ denote the number of edges. Since the coefficient of $k^{n-1}$ in $\chi_0(G;k)$ is $-m$, we also obtain the sum of all degrees, which is $2m$. We have
\begin{equation}\label{eq:M_0-M_1}
\sum_{r\geq 0} M_r(G)=n
\qquad\text{and}\qquad
\sum_{r\geq 0} r\,M_r(G)=2m.
\end{equation}
Since $M_r(G)$ is already known for all $r\geq 2$, equations \eqref{eq:M_0-M_1} determine $M_1(G)$ and $M_0(G)$. Finally, if $\Delta\leq 1$, then we only need $M_0(G)$ and $M_1(G)$, which are determined by \eqref{eq:M_0-M_1}.
\end{proof}

Since trees are triangle-free, we immediately obtain the following result.

\begin{cor}\label{cor:trees-degree-sequence}
If two trees have the same family of defective chromatic polynomials, then they have the same degree sequence.
\end{cor}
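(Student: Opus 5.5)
This follows directly from Theorem~\ref{thm:intro-degree-sequence}; the only thing to check is that a tree satisfies its hypothesis. Let $T_1$ and $T_2$ be trees with $\chi_d(T_1;k)=\chi_d(T_2;k)$ for every $d\geq 0$. A tree is acyclic, so it contains no $3$-cycle and is therefore triangle-free. Theorem~\ref{thm:intro-degree-sequence} then says that the full family $\{\chi_d(T_i;k)\}_{d\geq 0}$ determines the degree sequence of $T_i$.

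The proof of that theorem in fact gives an explicit reconstruction. The ingredients are the degree $n$ of $\chi_0$ and its $k^{n-1}$ coefficient $-m$. In addition, $\Delta$ is the least $d$ with $\chi_d=k^n$. The leading coefficients of $\chi_d-\chi_{d-1}$ for $2\leq d\leq\Delta$ are also used. From these, the numbers $M_r$ are recovered recursively for $r\geq 2$. Finally, $M_0$ and $M_1$ come from the two linear relations \eqref{eq:M_0-M_1}. Every step of this procedure reads only the polynomials. Hence if the two families agree, the outputs agree: $M_r(T_1)=M_r(T_2)$ for all $r$, which means the two trees have the same degree sequence.

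A small side observation, which is not needed for the argument: for trees one has $m=n-1$, so $m$ is already determined by $n$. Moreover, $M_0=0$ whenever $n\geq 2$. No step is a real obstacle. The only point requiring care is to make sure the "determines" in Theorem~\ref{thm:intro-degree-sequence} is applied to two graphs simultaneously, which is immediate because the reconstruction above is a function of the polynomial family alone.
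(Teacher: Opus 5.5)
Your proposal is correct and matches the paper's argument: trees are acyclic, hence triangle-free, and Theorem~\ref{thm:intro-degree-sequence} applies directly. Your recap of the reconstruction ($n$, $m$, $\Delta$, the $k^{n-d}$ coefficients for $2\leq d\leq\Delta$, then \eqref{eq:M_0-M_1}) is an optional but accurate clarification that ``determines'' means the degree sequence is a function of the polynomial family alone, so it applies to both trees at once.
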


\section{Path counts for trees}\label{sec:paths}

We now ask what further structural information the defective family records beyond the degree sequence. A natural candidate is the number $N(P_j,T)$ of copies of the path $P_j$ in $T$, which is the focus of this section.

By Corollary~\ref{cor:tree-expansion} and Remark~\ref{rmk:extraction}, the defective family $\{\chi_d(T;k)\}_{d\geq 0}$ determines the arrays $\{c_{r,\leq d}(T)\}$ and $\{c_{r,d}(T)\}$. To prove Theorem~\ref{thm:intro-path-counts}, it therefore suffices to express $N(P_j,T)$ for $j\in\{1,2,3,4\}$ in terms of the degree sequence and the invariants $c_{r,d}(T)$.

We separate the easy cases $j\leq 3$, which follow directly from the degree sequence, from the more involved case $j=4$, which requires a double count involving $P_3\sqcup P_2$. Throughout this section, we use the shorthand $V=V(T)$ and $E=E(T)$, and we write $d_v=\deg(v)$ for the degree of a vertex $v$.

\begin{lem}\label{lem:easy-path-counts}
For a tree $T$ on $n$ vertices,
\[
N(P_1, T) = n, \qquad N(P_2, T)= n-1, \qquad N(P_3, T) = c_{2,2}(T) = \sum_{v\in V} \binom{d_v}{2}.
\]
In particular, $2c_{2,2}(T)=\bigl(\sum_{v\in V} d_v^2\bigr)-2(n-1)$.
\end{lem}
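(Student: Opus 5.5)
The plan is to use the convention, implicit in the statement, that $P_j$ denotes the path on $j$ vertices. Then $N(P_1,T)$ counts vertices and $N(P_2,T)$ counts edges. For a tree on $n$ vertices these are $n$ and $n-1$, which settles the first two identities at once.

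For $P_3$, we count copies by their middle vertex. A subgraph isomorphic to $P_3$ is determined by its center $v$ together with an unordered pair of distinct neighbors of $v$. The two endpoints are distinct neighbors of $v$, and since $T$ has no triangles they are nonadjacent, so nothing extra is created. Hence
\[
N(P_3,T)=\sum_{v\in V}\binom{d_v}{2}.
\]
To identify this with $c_{2,2}(T)$, we classify two-edge subsets $A\subseteq E$. Either the two edges share a vertex, in which case $T[A]$ is a copy of $P_3$ and $\Delta(T[A])=2$. Or they are disjoint, in which case $T[A]$ is a matching and $\Delta(T[A])=1$. Two distinct edges of a simple graph share at most one vertex, so this dichotomy is exhaustive. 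The map $A\mapsto T[A]$ minus isolated vertices is then a bijection from $\{A:|A|=2,\ \Delta(T[A])=2\}$ onto the copies of $P_3$, giving $c_{2,2}(T)=N(P_3,T)$.

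For the final identity, expand $2\binom{d_v}{2}=d_v^2-d_v$, sum over $v$, and apply the handshake lemma $\sum_v d_v=2|E|=2(n-1)$.

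There is no real obstacle here. The only point needing care is the bijection between adjacent edge pairs and copies of $P_3$, namely that distinct pairs give distinct subgraphs; this is immediate since a subgraph is determined by its edge set.
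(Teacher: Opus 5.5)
Your proposal is correct and follows the paper's proof: the first two counts are immediate, $N(P_3,T)$ is counted by central vertex, two-edge sets with $\Delta(T[A])=2$ are identified with copies of $P_3$, and the final identity comes from $2\binom{d_v}{2}=d_v^2-d_v$ together with the handshake lemma. Your appeal to triangle-freeness is harmless but unnecessary, since $N(H,G)$ counts subgraphs rather than induced subgraphs.
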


\begin{proof}
The first two equalities are immediate: $N(P_1,T)=|V|=n$ and $N(P_2,T)=|E|=n-1$. For the third, a subset $A\subseteq E$ with $|A|=2$ and $\Delta(T[A])=2$ is precisely the edge set of a copy of $P_3$ in $T$, so $N(P_3, T)=c_{2,2}(T)$. The identity $N(P_3,T)=\sum_{v\in V}\binom{d_v}{2}$ follows from classifying each $P_3$ according to its central vertex: for each $v\in V$, the copies of $P_3$ centered at $v$ correspond bijectively to the $\binom{d_v}{2}$ unordered pairs of neighbors of $v$. Equating the two expressions for $N(P_3, T)$, rearranging, and using $\sum_{v\in V}d_v=2(n-1)$, we obtain
\[
2c_{2,2}(T)=\sum_{v\in V}(d_v^2-d_v)=\left(\sum_{v\in V}d_v^2\right)-2(n-1). \qedhere
\]
\end{proof}

Before turning to $N(P_4,T)$, recall that the \emph{first} and \emph{second Zagreb indices} of a graph $G$ are defined, respectively, by
\begin{align*}
    Z_1(G) = \sum_{uv\in E(G)} (d_u+d_v) \qquad \text{and} \qquad Z_2(G) =  \sum_{uv\in E(G)} d_u d_v.
\end{align*}
The first Zagreb index can also be expressed as $\sum_{v\in V(G)} d_v^2$: in the sum $\sum_{uv\in E(G)}(d_u+d_v)$, each vertex $u$ contributes the term $d_u$ once for each of its incident edges, for a total of $d_u^2$. In particular, $Z_1$ is determined by the degree sequence. The second Zagreb index $Z_2$ is not, but Corollary~\ref{cor:zagreb-dcp} below shows that it is nonetheless determined by the DCP family.

\begin{lem}\label{lem:P4-count}
For a tree $T$ on $n$ vertices,
\[
N(P_4, T) = \sum_{x\in V}\binom{d_x}{2}(n+1-d_x) - c_{3,2}(T) - 2c_{2,2}(T).
\]
\end{lem}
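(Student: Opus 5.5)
The plan is a double count of pairs consisting of a copy of $P_3$ together with one further edge of $T$. Every $3$-edge subset $A\subseteq E$ for which $T[A]$ contains a copy of $P_3$ is, since $T$ is a tree, isomorphic to exactly one of three configurations: a path $P_4$, a star $K_{1,3}$, or a disjoint union $P_3\sqcup P_2$. I would count the set
\[
\mathcal{S}\colonequals\{(B,e): B\subseteq E \text{ is the edge set of a copy of } P_3,\ e\in E\setminus B\}
\]
in two ways.

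\emph{First count.} I classify each copy of $P_3$ by its central vertex $x$, as in Lemma~\ref{lem:easy-path-counts}. There are $\binom{d_x}{2}$ copies centered at $x$, and each leaves $n-3$ edges available for $e$. Hence
\[
|\mathcal{S}|=\sum_{x\in V}\binom{d_x}{2}(n-3).
\]

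\emph{Second count.} I group the pairs by the $3$-edge set $A=B\cup\{e\}$. A copy of $P_4$ contains exactly $2$ copies of $P_3$, a $K_{1,3}$ contains $3$, and a $P_3\sqcup P_2$ contains $1$. This gives
\[
|\mathcal{S}|=2N(P_4,T)+3N(K_{1,3},T)+N(P_3\sqcup P_2,T).
\]
Next I identify these counts with known quantities. First, $N(K_{1,3},T)=\sum_{x}\binom{d_x}{3}$, by counting stars according to their center. Second, among $3$-edge sets, those with $\Delta(T[A])=2$ are exactly the copies of $P_4$ and of $P_3\sqcup P_2$, because a tree has no $C_3$ and $P_2\sqcup P_2\sqcup P_2$ has maximum degree $1$. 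Therefore $c_{3,2}(T)=N(P_4,T)+N(P_3\sqcup P_2,T)$.

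Eliminating $N(P_3\sqcup P_2,T)$ then yields
\[
N(P_4,T)=\sum_x\binom{d_x}{2}(n-3)-3\sum_x\binom{d_x}{3}-c_{3,2}(T).
\]

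The final step is algebraic matching with the stated form. I use the identity $3\binom{d}{3}=\binom{d}{2}(d-2)$, which turns the right-hand side into $\sum_x\binom{d_x}{2}(n-1-d_x)-c_{3,2}(T)$. Then I use $c_{2,2}(T)=\sum_x\binom{d_x}{2}$ from Lemma~\ref{lem:easy-path-counts} to rewrite this as $\sum_x\binom{d_x}{2}(n+1-d_x)-2c_{2,2}(T)-c_{3,2}(T)$, which is the claimed formula.

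The only step needing care is the case analysis of $3$-edge subforests. I must check that the list $P_4$, $K_{1,3}$, $P_3\sqcup P_2$, $3P_2$ is exhaustive for a tree and that each has the stated number of $P_3$'s and the stated maximum degree. This is a short finite check.
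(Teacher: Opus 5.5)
Your proof is correct, but it takes a different route from the paper's. The paper counts pairs $(Q,R)$ where $Q\cong P_3$ and $R$ is a \emph{vertex-disjoint} edge. For $Q$ given by $y\text{--}x\text{--}z$, the number of admissible $R$ is $n+1-(d_x+d_y+d_z)$, so this count gives $N(P_3\sqcup P_2,T)$ directly. Summed over all $Q$, it produces the second Zagreb index $Z_2(T)$. The paper then eliminates $Z_2(T)$ using a separate middle-edge count, $N(P_4,T)=\sum_{uv\in E}(d_u-1)(d_v-1)=Z_2(T)-\sum_v d_v^2+(n-1)$, together with $c_{3,2}(T)=N(P_4,T)+N(P_3\sqcup P_2,T)$.

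You instead let the extra edge be arbitrary, so each $P_3$ contributes exactly $n-3$ pairs. The overlapping configurations are then absorbed by the multiplicities $2,3,1$ for $P_4$, $K_{1,3}$, $P_3\sqcup P_2$, and $N(K_{1,3},T)=\sum_x\binom{d_x}{3}$ depends only on the degrees. Your argument is shorter and purely vertex-local, with no edge-weighted sums. It also lands on the tidier equivalent form $N(P_4,T)=\sum_x\binom{d_x}{2}(n-1-d_x)-c_{3,2}(T)$.

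What the paper gains from its detour through $Z_2$ is the identity \eqref{eq:Z2-formula}, which it reuses as Corollary~\ref{cor:zagreb-dcp}. From your version, that corollary follows only after you also invoke the middle-edge formula for $N(P_4,T)$, which costs one extra line.
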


\begin{proof}
Each copy of $P_4$ is determined by its middle edge $uv\in E$. To extend $uv$ to a $P_4$, we select one of the $d_u-1$ other neighbors of $u$ and one of the $d_v-1$ other neighbors of $v$. Hence
\begin{align*}
    N(P_4, T) &= \sum_{uv\in E} (d_u-1)(d_v-1) \\
    &= \sum_{uv\in E} (d_u d_v - d_u - d_v + 1) \\
    &=\sum_{uv\in E}d_ud_v -\sum_{uv\in E}(d_u+d_v)+\sum_{uv\in E}1\\
    &=Z_2(T) - \sum_{v\in V} d_v^2 + (n-1).
\end{align*}

The expression above involves $Z_2(T)$, which is not determined by the degree sequence. We will eliminate $Z_2(T)$ by counting copies of $P_3\sqcup P_2$ (the disjoint union of $P_3$ and $P_2$) and using the identity $c_{3, 2}(T)=N(P_3\sqcup P_2, T)+N(P_4,T)$.

Consider the set
\[
I = \{(Q, R) \mid Q, R \text{ are subgraphs of } T, \ Q\cong P_3,\ R\cong P_2,\ \text{ and } V(Q)\cap V(R)=\emptyset\}.
\]
In any copy of $P_3\sqcup P_2$ in the tree, there is a unique choice for the subgraphs $Q\cong P_3$ and $R\cong P_2$ with $V(Q)\cap V(R)=\emptyset$. Thus $|I|=N(P_3\sqcup P_2, T)$.

We now compute $|I|$ in a different way. Fix a copy $Q_0\cong P_3$, say $Q_0$ is given by $y\text{--}x\text{--}z$. The number of edges $R\cong P_2$ such that $V(Q_0)\cap V(R)=\emptyset$ is precisely the number of edges in $T\setminus \{x,y,z\}$. When we remove the vertices $x,y,z$ from $T$, we delete exactly $d_x+d_y+d_z-2$ edges. Consequently,
\[
\text{number of edges in } T\setminus \{x,y,z\}
=(n-1)-(d_x+d_y+d_z-2)=n-(d_x+d_y+d_z)+1.
\]
Summing over all choices of $Q_0$, we obtain
\begin{align*}
|I| &= \sum_{y\text{--}x\text{--}z} (n-(d_x+d_y+d_z)+1) \\
&= \sum_{x\in V} \sum_{\{y,z\}\subseteq N(x)} (n+1-d_x-(d_y+d_z)) \\
&=\sum_{x\in V}\sum_{\{y,z\}\subseteq N(x)}(n+1-d_x) - \sum_{x\in V}\sum_{\{y,z\}\subseteq N(x)} (d_y+d_z) \\
&=\sum_{x\in V} \binom{d_x}{2}(n+1-d_x) - \sum_{x\in V} \sum_{y\in N(x)} (d_x-1)d_y.
\end{align*}
The second sum simplifies when rewritten as a sum over edges. Each edge $uv \in E$ contributes two terms: $(d_u-1)d_v$ and $(d_v-1)d_u$. Thus,
\begin{align*}
|I| &= \sum_{x\in V}\binom{d_x}{2}(n+1-d_x) - \sum_{uv\in E}((d_u-1)d_v+(d_v-1)d_u) \\
&= \sum_{x\in V}\binom{d_x}{2}(n+1-d_x)-2\sum_{uv\in E} d_u d_v + \sum_{uv\in E}(d_u+d_v) \\
&= \sum_{x\in V}\binom{d_x}{2}(n+1-d_x) - 2 Z_2(T) + \sum_{v\in V} d_v^2.
\end{align*}
Since $|I|=N(P_3\sqcup P_2, T)$, we have
\begin{equation}\label{eq:P3uP2}
N(P_3\sqcup P_2, T) = \sum_{x\in V}\binom{d_x}{2}(n+1-d_x) - 2 Z_2(T) + \sum_{v\in V} d_v^2.
\end{equation}
A subset $A\subseteq E(T)$ with $|A|=3$ and $\Delta(T[A])=2$ is the edge set of a copy of $P_3\sqcup P_2$ or $P_4$, so
\[
c_{3,2}(T) = N(P_3\sqcup P_2, T) + N(P_4,T).
\]
Substituting \eqref{eq:P3uP2} and
\begin{equation}\label{eq:P4-Z2}
N(P_4, T) = Z_2(T) -\sum_{v\in V} d_v^2 + (n-1)
\end{equation}
gives
\[
c_{3,2}(T) = \sum_{x\in V} \binom{d_x}{2}(n+1-d_x) -Z_2(T) + (n-1),
\]
so
\begin{equation}\label{eq:Z2-formula}
Z_2(T) = \sum_{x\in V}\binom{d_x}{2}(n+1-d_x)+(n-1)-c_{3,2}(T).
\end{equation}
Substituting \eqref{eq:Z2-formula} into \eqref{eq:P4-Z2} yields
\[
N(P_4, T) =  \sum_{x\in V} \binom{d_x}{2}(n+1-d_x) - c_{3,2}(T) + 2(n-1) - \sum_{v\in V} d_v^2.
\]
Finally, by Lemma~\ref{lem:easy-path-counts}, $\sum_{v}d_v^2-2(n-1)=2c_{2,2}(T)$, which gives the cleaner formula
\[
N(P_4, T) = \sum_{x\in V}\binom{d_x}{2}(n+1-d_x) - c_{3,2}(T) - 2c_{2,2}(T). \qedhere
\]
\end{proof}

\begin{proof}[Proof of Theorem~\ref{thm:intro-path-counts}]
By Corollary~\ref{cor:trees-degree-sequence} and Remark~\ref{rmk:extraction}, if $T_1$ and $T_2$ have the same defective chromatic polynomials, they share the same degree sequence and satisfy $c_{r,d}(T_1)=c_{r,d}(T_2)$ for all $r,d$. Each of the formulas
\[
N(P_1, T) = n, \qquad N(P_2, T)=n-1, \qquad N(P_3, T)=\sum_{v\in V}\binom{d_v}{2},
\]
\[
N(P_4, T) = \sum_{x\in V}\binom{d_x}{2}(n+1-d_x) - c_{3,2}(T) - 2c_{2,2}(T)
\]
from Lemmas~\ref{lem:easy-path-counts} and~\ref{lem:P4-count} depends only on these invariants. Hence $N(P_j,T_1)=N(P_j,T_2)$ for $j\in\{1,2,3,4\}$.
\end{proof}

We close with a byproduct of Lemma~\ref{lem:P4-count}. Unlike $Z_1$, the second Zagreb index is not determined by the degree sequence alone: the two caterpillars below share the degree sequence $(3,2,2,1,1,1)$ but have different values of $Z_2$, namely $18$ for the left tree and $19$ for the right.

\begin{center}
\begin{tikzpicture}[
    scale=0.8,
    thick,
    every node/.style={circle,draw,fill=black,inner sep=2pt}
]

    \begin{scope}[xshift=0cm]
        \node (a1) at (0, 0)   {};
        \node (a2) at (1.5, 0) {};
        \node (a3) at (3, 0)   {};
        \node (a4) at (4.5, 0) {};
        \node (a5) at (6, 0)   {};
        \node (a2u) at (1.5, 1.2) {};
        \path (a1) edge (a2);
        \path (a2) edge (a3);
        \path (a3) edge (a4);
        \path (a4) edge (a5);
        \path (a2) edge (a2u);
    \end{scope}

    \begin{scope}[xshift=8.5cm]
        \node (b1) at (0, 0)   {};
        \node (b2) at (1.5, 0) {};
        \node (b3) at (3, 0)   {};
        \node (b4) at (4.5, 0) {};
        \node (b5) at (6, 0)   {};
        \node (b3u) at (3, 1.2) {};
        \path (b1) edge (b2);
        \path (b2) edge (b3);
        \path (b3) edge (b4);
        \path (b4) edge (b5);
        \path (b3) edge (b3u);
    \end{scope}
\end{tikzpicture}
\end{center}

Nonetheless, $Z_2$ is determined by the DCP family:

\begin{cor}\label{cor:zagreb-dcp}
For a tree $T$ on $n$ vertices,
\[
Z_2(T) = \sum_{x\in V}\binom{d_x}{2}(n+1-d_x)+(n-1)-c_{3,2}(T).
\]
In particular, $Z_2(T)$ is determined by the degree sequence together with $c_{3,2}(T)$, and hence by the full family $\{\chi_d(T;k)\}_{d\geq 0}$. Consequently, two trees with the same defective chromatic polynomials have the same second Zagreb index.
\end{cor}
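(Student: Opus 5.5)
The identity is exactly equation \eqref{eq:Z2-formula}, derived in the proof of Lemma~\ref{lem:P4-count}. The plan is therefore to re-extract it cleanly and then read off the determinacy claims.

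First, express $Z_2(T)$ through $N(P_4,T)$. Classifying each copy of $P_4$ by its middle edge gives $N(P_4,T)=Z_2(T)-\sum_v d_v^2+(n-1)$. Next, double count the disjoint pairs (copy of $P_3$, disjoint edge). For each $P_3$ $y\text{--}x\text{--}z$, the number of disjoint edges is $n+1-(d_x+d_y+d_z)$. Summing over all copies of $P_3$ and regrouping the neighbor-degree terms edge by edge yields
\[
N(P_3\sqcup P_2,T)=\sum_x\binom{d_x}{2}(n+1-d_x)-2Z_2(T)+\sum_v d_v^2 .
\]

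Then use the observation that a $3$-edge set with maximum degree exactly $2$ spans either $P_4$ or $P_3\sqcup P_2$. This gives $c_{3,2}(T)=N(P_4,T)+N(P_3\sqcup P_2,T)$. Adding the two displayed expressions, the $\sum_v d_v^2$ terms cancel and one copy of $Z_2$ survives with a minus sign. Solving for $Z_2(T)$ gives exactly the stated formula.

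For the ``in particular'' claims, the first sum and $n$ depend only on the degree sequence. By Corollary~\ref{cor:trees-degree-sequence}, the degree sequence is determined by the DCP family. By Corollary~\ref{cor:tree-expansion} and Remark~\ref{rmk:extraction}, $c_{3,2}(T)$ is also determined by the family. Hence two trees with equal defective chromatic polynomials have equal $Z_2$.

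The only delicate step is the $P_3\sqcup P_2$ double count. One must check that removing $\{x,y,z\}$ deletes exactly $d_x+d_y+d_z-2$ edges; this holds because the two path edges are each counted twice and there are no other edges among these vertices, since $T$ is a tree. One must also check that the decomposition of each copy into $(Q,R)$ is unique. Both points were already handled in the proof of Lemma~\ref{lem:P4-count}, so the corollary follows directly from that computation.
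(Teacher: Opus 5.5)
Your proposal is correct and follows the paper's route. The paper's proof cites equation~\eqref{eq:Z2-formula}, which Lemma~\ref{lem:P4-count} obtains from the same combination $c_{3,2}(T)=N(P_4,T)+N(P_3\sqcup P_2,T)$ that you re-derive, and then invokes Corollary~\ref{cor:trees-degree-sequence} and Remark~\ref{rmk:extraction} for the determinacy claims, exactly as you do.
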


\begin{proof}
The formula is equation~\eqref{eq:Z2-formula}, established in the proof of Lemma~\ref{lem:P4-count}. The remaining claims follow because the degree sequence is determined by Corollary~\ref{cor:trees-degree-sequence} and $c_{3,2}(T)$ is determined by Remark~\ref{rmk:extraction}.
\end{proof}

\begin{ex}\label{ex:nine-vertices}
Consider the following two trees of order $9$:

\begin{center}
\begin{tikzpicture}[
    scale=0.8,
    thick,
    every node/.style={circle,draw,fill=black,inner sep=2pt}
]
    \node (v3) at (1.5,1.2) {};
    \node (v2) at (1.5,0) {};
    \node (v1) at (3,0) {};
    \node (v0) at (4.5,0) {};
    \node (v4) at (6,0) {};
    \node (v6) at (6.25,1.2) {};
    \node (v8) at (4.25,1.2) {};
    \node (v7) at (4.75,1.2) {};
    \node (v5) at (5.75,1.2) {};

    \draw (v3)--(v2)--(v1)--(v0)--(v4)--(v6);
    \draw (v0)--(v7);
    \draw (v0)--(v8);
    \draw (v4)--(v5);

    \node[draw=none,fill=none] at (0,0.6) {$T_1$};
\end{tikzpicture}
\hspace{1.2cm}
\begin{tikzpicture}[
    scale=0.8,
    thick,
    every node/.style={circle,draw,fill=black,inner sep=2pt}
]
    \node (v8) at (1.5,1.2) {};
    \node (v7) at (1.5,0) {};
    \node (v0) at (3,0) {};
    \node (v1) at (4.5,0) {};
    \node (v2) at (4.5,1.2) {};
    \node (v5) at (3,1.2) {};
    \node (v6) at (3,2.4) {};
    \node (v4) at (4.0,1.2) {};
    \node (v3) at (5,1.2) {};

    \draw (v8)--(v7)--(v0)--(v1)--(v2);
    \draw (v0)--(v5)--(v6);
    \draw (v1)--(v3);
    \draw (v1)--(v4);

    \node[draw=none,fill=none] at (0,0.6) {$T_2$};
\end{tikzpicture}
\end{center}

These trees are not isomorphic, but they share the same degree sequence
\[
(4,3,2,2,1,1,1,1,1),
\]
and the same defective chromatic polynomials:
\begin{align*}
\chi_0(T_i;k)&=k(k-1)^8,\\
\chi_1(T_i;k)&=k^9 - 11k^7 + 20k^6 - 5k^5 - 16k^4 + 15k^3 - 4k^2,\\
\chi_2(T_i;k)&=k^9 - 5k^6 + 3k^5 + 3k^4 - 2k^3,\\
\chi_3(T_i;k)&=k^9 - k^5,\\
\chi_d(T_i;k)&=k^9\qquad (d\geq 4).
\end{align*}
Yet $N(P_5,T_1)=5$ while $N(P_5,T_2)=7$, so the full family of defective chromatic polynomials does not determine $N(P_5,T)$.
\end{ex}

\begin{rmk}
Example~\ref{ex:nine-vertices} shows that Theorem~\ref{thm:intro-path-counts} is sharp: $N(P_5,T)$ is the first path count not determined by the defective family.
\end{rmk}

\section{An infinite family}\label{sec:infinite}

Theorem~\ref{thm:intro-infinite} is a consequence of the explicit construction in Theorem~\ref{thm:infinite-family}. For each $a\geq 1$, we construct two nonisomorphic trees $X_a$ and $Y_a$ of order $a+11$ with the same family of defective chromatic polynomials. Recall that a \emph{linear forest} is a forest whose components are all paths, or equivalently, a forest of maximum degree at most $2$. The proof of Theorem~\ref{thm:infinite-family} has two parts: we first show $c_{r,\leq 2}(X_a)=c_{r,\leq 2}(Y_a)$ by counting linear forests, and then show $c_{r,\leq 1}(X_a)=c_{r,\leq 1}(Y_a)$ by proving that the trees are cospectral and using the fact that, for a tree, the characteristic polynomial equals the matching polynomial \cite[Chapter~2]{God93}.

\begin{defn}\label{def:construction}
Let $C$ be the caterpillar tree of order $11$ obtained from the path
\[
v_1\text{-}u\text{-}z_1\text{-}z_2\text{-}w\text{-}t_1\text{-}t_2\text{-}t_3\text{-}t_4
\]
by adjoining a leaf edge at $u$ and a leaf edge at $w$.

For each integer $a\geq 1$, define $X_a$ to be the tree obtained from $C$ by attaching a path with $a$ new vertices at $z_1$, and define $Y_a$ analogously by attaching a path with $a$ new vertices at $t_1$. Equivalently, $X_a$ is obtained by adding new vertices $x_1,\dots,x_a$ and edges
\[
z_1x_1,\ x_1x_2,\ \dots,\ x_{a-1}x_a,
\]
while $Y_a$ is obtained by adding new vertices $y_1,\dots,y_a$ and edges
\[
t_1y_1,\ y_1y_2,\ \dots,\ y_{a-1}y_a.
\]
\end{defn}

Both $X_a$ and $Y_a$ are trees on $a+11$ vertices with maximum degree $3$. Moreover, they are nonisomorphic: the pairwise distances among the three degree-$3$ vertices of $X_a$ form the multiset $\{1,2,3\}$, while in $Y_a$ they form $\{1,3,4\}$.

For $a=2$, this construction yields the trees $X_2$ and $Y_2$ shown below:

\medskip 

\begin{center}
\begin{tikzpicture}[scale=0.8,
    thick,
    every node/.style={circle,draw,fill=black,inner sep=2pt}]
    \begin{scope}[yshift=2.7cm, xshift=-2cm]
        \node (v1) at (0,0) {};
        \node (u)  at (1.4,0) {};
        \node (z1) at (2.8,0) {};
        \node (z2) at (4.2,0) {};
        \node (w)  at (5.6,0) {};
        \node (t1) at (7.0,0) {};
        \node (t2) at (8.4,0) {};
        \node (t3) at (9.8,0) {};
        \node (t4) at (11.2,0) {};
        \node (upu) at (1.4,1.15) {};
        \node (upw) at (5.6,1.15) {};
        \node (x1) at (2.8,1.15) {};
        \node (x2) at (2.8,2.30) {};
        \draw (v1)--(u)--(z1)--(z2)--(w)--(t1)--(t2)--(t3)--(t4);
        \draw (u)--(upu);
        \draw (w)--(upw);
        \draw (z1)--(x1)--(x2);
        \node[draw=none, fill=none, below=2pt] at (u)  {$u$};
        \node[draw=none, fill=none, below=2pt] at (z1) {$z_1$};
        \node[draw=none, fill=none, below=2pt] at (w)  {$w$};
        \node[draw=none, fill=none] at (-1.2,1.05) {$X_2$};
    \end{scope}

    \begin{scope}[yshift=-1cm, xshift=-2cm]
        \node (v1) at (0,0) {};
        \node (u)  at (1.4,0) {};
        \node (z1) at (2.8,0) {};
        \node (z2) at (4.2,0) {};
        \node (w)  at (5.6,0) {};
        \node (t1) at (7.0,0) {};
        \node (t2) at (8.4,0) {};
        \node (t3) at (9.8,0) {};
        \node (t4) at (11.2,0) {};
        \node (upu) at (1.4,1.15) {};
        \node (upw) at (5.6,1.15) {};
        \node (y1) at (7.0,1.15) {};
        \node (y2) at (7.0,2.30) {};
        \draw (v1)--(u)--(z1)--(z2)--(w)--(t1)--(t2)--(t3)--(t4);
        \draw (u)--(upu);
        \draw (w)--(upw);
        \draw (t1)--(y1)--(y2);
        \node[draw=none, fill=none, below=2pt] at (u)  {$u$};
        \node[draw=none, fill=none, below=2pt] at (w)  {$w$};
        \node[draw=none, fill=none, below=2pt] at (t1) {$t_1$};
        \node[draw=none, fill=none] at (-1.2,1.05) {$Y_2$};
    \end{scope}
\end{tikzpicture}
\end{center}

\begin{lem}\label{lem:linear-forest-counts}
For every $a\geq 1$ and every $r\geq 0$,
\[
c_{r,\leq 2}(X_a)=c_{r,\leq 2}(Y_a)
=
\binom{a+10}{r}
-3\binom{a+7}{r-3}
+\binom{a+5}{r-5}
+2\binom{a+4}{r-6}
-\binom{a+2}{r-8}.
\]
In particular, $X_a$ and $Y_a$ have the same $2$-defective chromatic polynomial.
\end{lem}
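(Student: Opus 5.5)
Count linear forests by inclusion--exclusion over the vertices where the degree-$\leq 2$ condition fails. A subset $A\subseteq E(T)$ has $\Delta(T[A])\leq 2$ if and only if, at each vertex $v$ of degree $3$ in $T$ (both trees have maximum degree $3$), $A$ does not contain all three edges at $v$. Let $S$ be the set of degree-$3$ vertices: $\{u,z_1,w\}$ in $X_a$ and $\{u,w,t_1\}$ in $Y_a$. For $B\subseteq S$, let $E_B$ be the union of the edge stars at the vertices of $B$. Inclusion--exclusion then gives
\[
c_{r,\leq 2}(T)=\sum_{B\subseteq S}(-1)^{|B|}\binom{|E(T)|-|E_B|}{r-|E_B|},
\]
since fixing $E_B\subseteq A$ leaves $r-|E_B|$ edges to be chosen freely from the remaining $|E(T)|-|E_B|=a+10-|E_B|$ edges. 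So the whole computation reduces to finding $|E_B|=3|B|-(\text{number of edges of }T\text{ joining two vertices of }B)$ for each $B$.

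\emph{Term by term.}
\begin{itemize}
\item $B=\emptyset$ gives $\binom{a+10}{r}$.
\item Each singleton has $|E_B|=3$, giving $-3\binom{a+7}{r-3}$.
\item For pairs, we need the adjacencies among the branch vertices. In $X_a$ the distances are $d(u,z_1)=1$, $d(z_1,w)=2$, $d(u,w)=3$, so exactly one pair is adjacent. In $Y_a$ we have $d(w,t_1)=1$, $d(u,w)=3$, $d(u,t_1)=4$, again exactly one adjacent pair. In both trees one pair has $|E_B|=5$ and two pairs have $|E_B|=6$, contributing $+\binom{a+5}{r-5}+2\binom{a+4}{r-6}$.
\item The full triple spans one edge in both trees, so $|E_B|=8$, contributing $-\binom{a+2}{r-8}$.
\end{itemize}
This yields exactly the stated formula, identical for $X_a$ and $Y_a$.

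The only point needing care is justifying the adjacency structure of the branch vertices, which I would read off directly from Definition~\ref{def:construction}. The vertex $u$ is adjacent to $z_1$; the vertex $w$ is adjacent to $t_1$; and $z_1,w$ are separated by $z_2$. The attached path of $a\geq 1$ vertices makes $z_1$ (respectively $t_1$) have degree $3$, and creates no other degree-$3$ vertices, since $x_a$ or $y_a$ is a leaf and the interior path vertices have degree $2$.

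Finally, since $c_{r,\leq 2}(X_a)=c_{r,\leq 2}(Y_a)$ for all $r$, Corollary~\ref{cor:tree-expansion} with $d=2$ gives $\chi_2(X_a;k)=\chi_2(Y_a;k)$. I expect no real obstacle here.
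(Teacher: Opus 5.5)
Your proposal is correct and follows the paper's proof. Both arguments apply inclusion--exclusion over the three degree-$3$ vertices. The only structural input in each is that exactly one pair of branch vertices is adjacent in each tree, so the pair terms force $5,6,6$ edges and the triple forces $8$; your formula $|E_B|=3|B|-(\text{edges within }B)$ is a compact way of writing the paper's case-by-case edge counts.
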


\begin{proof}
Set $m=a+10$, the common number of edges. We first count $c_{r,\leq 2}(X_a)$. Since $X_a$ has exactly three vertices of degree $3$, namely $u$, $z_1$, and $w$, an $r$-edge subset of $E(X_a)$ is a linear forest if and only if it avoids selecting all three edges incident with any one of these vertices. For each of these vertices, let $B_u$, $B_{z_1}$, and $B_w$ denote the event that all three incident edges are selected (the ``bad'' events in the inclusion--exclusion). Then
\[
c_{r,\leq 2}(X_a)=\binom{m}{r}-|B_u|-|B_{z_1}|-|B_w|
+|B_u\cap B_{z_1}|+|B_u\cap B_w|+|B_{z_1}\cap B_w|
-|B_u\cap B_{z_1}\cap B_w|.
\]
Each single bad event forces exactly three edges, so
\[
|B_u|=|B_{z_1}|=|B_w|=\binom{m-3}{r-3}.
\]
The events $B_u$ and $B_{z_1}$ share the edge $uz_1$, so together they force five distinct edges:
\[
|B_u\cap B_{z_1}|=\binom{m-5}{r-5}.
\]
The events $B_u$ and $B_w$ are disjoint, so together they force six edges:
\[
|B_u\cap B_w|=\binom{m-6}{r-6}.
\]
Likewise, $B_{z_1}$ and $B_w$ are disjoint, so
\[
|B_{z_1}\cap B_w|=\binom{m-6}{r-6}.
\]
Finally, the triple intersection forces eight distinct edges:
\[
|B_u\cap B_{z_1}\cap B_w|=\binom{m-8}{r-8}.
\]
Substituting gives
\[
c_{r,\leq 2}(X_a)
=
\binom{m}{r}
-3\binom{m-3}{r-3}
+\binom{m-5}{r-5}
+2\binom{m-6}{r-6}
-\binom{m-8}{r-8}.
\]
Now we count $c_{r,\leq 2}(Y_a)$. Here the three degree-$3$ vertices are $u$, $w$, and $t_1$. As before, let $B_u$, $B_w$, and $B_{t_1}$ denote the corresponding bad events that all three incident edges are selected. Again, an $r$-edge subset is a linear forest if and only if it avoids all three bad events, so
\[
c_{r,\leq 2}(Y_a)=\binom{m}{r}-|B_u|-|B_w|-|B_{t_1}|
+|B_u\cap B_w|+|B_u\cap B_{t_1}|+|B_w\cap B_{t_1}|
-|B_u\cap B_w\cap B_{t_1}|.
\]
Each single bad event again forces three edges:
\[
|B_u|=|B_w|=|B_{t_1}|=\binom{m-3}{r-3}.
\]
The events $B_u$ and $B_w$ are disjoint, so
\[
|B_u\cap B_w|=\binom{m-6}{r-6}.
\]
The events $B_u$ and $B_{t_1}$ are also disjoint, so
\[
|B_u\cap B_{t_1}|=\binom{m-6}{r-6}.
\]
The events $B_w$ and $B_{t_1}$ share the edge $wt_1$, so together they force five distinct edges:
\[
|B_w\cap B_{t_1}|=\binom{m-5}{r-5}.
\]
Finally, the triple intersection forces eight distinct edges:
\[
|B_u\cap B_w\cap B_{t_1}|=\binom{m-8}{r-8}.
\]
Thus
\[
c_{r,\leq 2}(Y_a)
=
\binom{m}{r}
-3\binom{m-3}{r-3}
+\binom{m-5}{r-5}
+2\binom{m-6}{r-6}
-\binom{m-8}{r-8}.
\]
Hence $c_{r,\leq 2}(X_a)=c_{r,\leq 2}(Y_a)$ for all $a\geq 1$ and $r\geq 0$. The final claim follows from Corollary~\ref{cor:tree-expansion}.
\end{proof}

For the second part of the proof of Theorem \ref{thm:infinite-family}, we turn to the case $d=1$. Our main tool is the following lemma of Schwenk \cite{Schwenk73}, which produces many pairs of cospectral trees; a short proof appears in \cite[Lemma~14.2.1]{BH12}.

\begin{lem}[Schwenk]\label{lem:schwenk}
Let $G$ and $G'$ be cospectral graphs and let $x\in V(G)$ and $x'\in V(G')$. Suppose that $G-x$ and $G'-x'$ are also cospectral. Let $H$ be any graph with a distinguished vertex $y$, and let $G+_{x,y}H$ denote the graph obtained by identifying $x$ with $y$. Then $G+_{x,y}H$ and $G'+_{x',y}H$ are cospectral.
\end{lem}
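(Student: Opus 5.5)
We prove Schwenk's lemma using the standard vertex-deletion recurrence for the characteristic polynomial at a cut vertex. Write $\phi(G;\lambda)=\det(\lambda I-A(G))$. Let $F$ be obtained from graphs $G$ and $H$ by identifying $x\in V(G)$ with $y\in V(H)$ into a single vertex $v$. The identity we aim to establish is
\[
\phi(F)=\phi(G)\,\phi(H-y)+\phi(G-x)\,\phi(H)-\lambda\,\phi(G-x)\,\phi(H-y).
\]
Once this holds, the lemma is immediate. The right-hand side involves $G$ only through $\phi(G)$ and $\phi(G-x)$, and $H$ is the same on both sides. Cospectrality of $G,G'$ and of $G-x,G'-x'$ therefore gives $\phi(G+_{x,y}H)=\phi(G'+_{x',y}H)$.

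To prove the identity, order the vertices of $F$ as $V(G)\setminus\{x\}$, then $v$, then $V(H)\setminus\{y\}$. The matrix $\lambda I-A(F)$ is then block tridiagonal, with the row and column of $v$ coupling the two blocks. There are two ways to obtain the formula.

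\emph{Linearity in the diagonal entry.} Write the $(v,v)$ entry $\lambda$ as a sum of two pieces. Split $\lambda I-A(F)$ by linearity of the determinant in the row of $v$, so that the $v$-row sees either only the $G$-side entries plus $\lambda$, or only the $H$-side entries with $0$ on the diagonal. Then Laplace-expand each part.

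\emph{Sachs/walk expansion.} Alternatively, use Godsil's formula $\phi(F)=\lambda\phi(F-v)-\sum_{u\sim v}\phi(F-v-u)-2\sum_{C\ni v}\phi(F-V(C))$. Every cycle through $v$ lies entirely in $G$ or entirely in $H$, since $v$ is a cut vertex. So the sums split into a $G$-part and an $H$-part. Apply the same expansion to $G$ at $x$ and to $H$ at $y$, and use $F-v=(G-x)\sqcup(H-y)$, so that $\phi(F-v)=\phi(G-x)\phi(H-y)$. The $G$-part reassembles into $(\phi(G)-\lambda\phi(G-x))\phi(H-y)$, the $H$-part into $\phi(G-x)(\phi(H)-\lambda\phi(H-y))$, and the leading term contributes $\lambda\phi(G-x)\phi(H-y)$. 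Adding these gives the identity.

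The main obstacle is carrying out this bookkeeping carefully, so that each sign and the $\lambda$-term are counted exactly once. For the trees used later, the cycle terms vanish, and the identity reduces to the matching-polynomial recurrence $\mu(F)=\lambda\mu(F-v)-\sum_{u\sim v}\mu(F-v-u)$, which can be checked directly.
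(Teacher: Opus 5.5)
Your proof is correct. Splitting the $v$-row gives a block-triangular term $\phi(G)\phi(H-y)$ plus a term $\phi(G-x)\bigl(\phi(H)-\lambda\phi(H-y)\bigr)$, and the vertex recurrence reassembles exactly as you describe, so either route yields the coalescence identity, from which the lemma follows at once. The paper gives no proof of its own and instead cites Schwenk and Brouwer--Haemers (Lemma 14.2.1); this coalescence identity is the standard argument for the lemma.
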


\begin{lem}\label{lem:cospectral-family}
For every $a\geq 1$, the trees $X_a$ and $Y_a$ are cospectral.
\end{lem}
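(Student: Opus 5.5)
The plan is to obtain the statement as a direct application of Schwenk's Lemma~\ref{lem:schwenk}, taking both base graphs to be the caterpillar $C$ of Definition~\ref{def:construction}.

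\textbf{Setting up Schwenk's lemma.} Take $G=G'=C$, which are trivially cospectral. Take $x=z_1$ and $x'=t_1$. Let $H=P_{a+1}$ be the path on $a+1$ vertices, with distinguished vertex $y$ one of its endpoints. Identifying $y$ with $z_1$ attaches a pendant path with $a$ new vertices at $z_1$, so $C+_{z_1,y}H$ is exactly $X_a$. Likewise, $C+_{t_1,y}H$ is exactly $Y_a$. It therefore suffices to check the remaining hypothesis of Lemma~\ref{lem:schwenk}, namely that $C-z_1$ and $C-t_1$ are cospectral.

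\textbf{The key check.} I expect to show the stronger statement that $C-z_1$ and $C-t_1$ are isomorphic. Write $\ell_u$ and $\ell_w$ for the leaves attached at $u$ and $w$.

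Deleting $z_1$ from $C$ leaves two components:
\begin{itemize}
\item the $3$-vertex path $v_1\text{-}u\text{-}\ell_u$;
\item the tree on $z_2,w,\ell_w,t_1,t_2,t_3,t_4$. Here $w$ has degree $3$, and the three branches at $w$ have lengths $1$ (to $z_2$), $1$ (to $\ell_w$) and $4$ (along $t_1\text{-}t_2\text{-}t_3\text{-}t_4$).
\end{itemize}

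Deleting $t_1$ from $C$ also leaves two components:
\begin{itemize}
\item the $3$-vertex path $t_2\text{-}t_3\text{-}t_4$;
\item the tree on $v_1,u,\ell_u,z_1,z_2,w,\ell_w$. Here $w$ now has degree $2$, so $u$ is the only vertex of degree $3$. The three branches at $u$ have lengths $1$ (to $v_1$), $1$ (to $\ell_u$) and $4$ (along $z_1\text{-}z_2\text{-}w\text{-}\ell_w$).
\end{itemize}

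So both forests are $P_3\sqcup S(1,1,4)$, where $S(1,1,4)$ denotes the spider with legs of lengths $1,1,4$. Isomorphic graphs are cospectral, so the hypothesis holds.

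\textbf{Conclusion and main obstacle.} Lemma~\ref{lem:schwenk} now gives that $X_a$ and $Y_a$ are cospectral for every $a\geq 1$.

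The only point needing care is the bookkeeping in the key check. One must be sure that after deleting $t_1$ the vertex $w$ has lost a neighbor, so that the second forest is genuinely a spider and not a tree with two branch vertices. If the isomorphism had failed, my fallback would be to compare the two forests through their matching polynomials (which equal their characteristic polynomials, since they are forests) by counting $r$-matchings directly. The isomorphism makes this unnecessary.
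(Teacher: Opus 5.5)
Your proposal is correct and follows the paper's proof essentially verbatim. Both apply Schwenk's lemma to two copies of $C$ with $x=z_1$, $x'=t_1$, and $H=P_{a+1}$ rooted at an endpoint, and both verify that $C-z_1\cong C-t_1\cong P_3\sqcup S(1,1,4)$; the paper describes the same $7$-vertex spider as $P_6$ with a leaf attached at the vertex adjacent to an endpoint.
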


\begin{proof}
Let $G$ and $G'$ be two copies of the caterpillar $C$ from Definition~\ref{def:construction}. Since $G\cong G'$, the graphs $G$ and $G'$ are cospectral. Let $x\in V(G)$ correspond to the vertex $z_1$, and let $x'\in V(G')$ correspond to the vertex $t_1$.

Moreover, we claim that $G-x \cong G'-x'$. Indeed, each of these forests is the disjoint union of a copy of $P_3$ and the $7$-vertex tree obtained from $P_6$ by attaching a leaf at the vertex adjacent to an endpoint. In particular, $G-x$ and $G'-x'$ are cospectral.

Let $H_a=P_{a+1}$, and distinguish one endpoint $y$ of $H_a$. Then $G+_{x,y}H_a$ is exactly $X_a$, while $G'+_{x',y}H_a$ is exactly $Y_a$. By Lemma~\ref{lem:schwenk}, the two trees are cospectral.
\end{proof}

\begin{cor}\label{cor:matching-family}
For every $a\geq 1$ and every $r\geq 0$, we have
\[
c_{r,\leq 1}(X_a)=c_{r,\leq 1}(Y_a).
\]
Consequently, $X_a$ and $Y_a$ have the same $1$-defective chromatic polynomial.
\end{cor}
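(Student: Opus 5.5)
The plan is to pass from cospectrality to equal matching counts, and from there to the $1$-defective polynomials. The starting point is Lemma~\ref{lem:cospectral-family}: $X_a$ and $Y_a$ are cospectral, so they have the same characteristic polynomial $\phi(X_a;x)=\phi(Y_a;x)$. For any forest $F$ on $N$ vertices, the characteristic polynomial of the adjacency matrix equals the matching polynomial \cite[Chapter~2]{God93}:
\[
\phi(F;x)=\sum_{r\geq 0}(-1)^r m_r(F)\,x^{N-2r},
\]
where $m_r(F)$ is the number of $r$-edge matchings of $F$. Both $X_a$ and $Y_a$ are trees on the same number $a+11$ of vertices. The monomials $x^{a+11-2r}$ are distinct for distinct $r$, so comparing coefficients gives $m_r(X_a)=m_r(Y_a)$ for every $r\geq 0$.

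Next, I identify $m_r$ with $c_{r,\leq 1}$. A subset $A\subseteq E(T)$ satisfies $\Delta(T[A])\leq 1$ exactly when no two edges of $A$ share a vertex, that is, when $A$ is a matching. Hence $c_{r,\leq 1}(T)=m_r(T)$ for every tree $T$, which is the interpretation already noted after Remark~\ref{rmk:extraction}. This yields $c_{r,\leq 1}(X_a)=c_{r,\leq 1}(Y_a)$ for all $r$.

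Finally, I apply Corollary~\ref{cor:tree-expansion} with $d=1$ and $n=a+11$. It expresses $\chi_1(T;k)$ as $\sum_r c_{r,\leq 1}(T)\,k(k-1)^{n-r-1}$. Since both trees have the same $n$ and the same coefficients, their $1$-defective chromatic polynomials coincide.

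No step is a real obstacle. The only point needing care is the sign and indexing convention in the identity "characteristic polynomial $=$ matching polynomial for forests". This holds because in the permutation expansion of $\det(xI-A)$ for an acyclic graph, only permutations consisting of fixed points and transpositions along edges contribute, and each transposition contributes $-1$. Either convention recovers $m_r$ up to a uniform sign, which suffices for the comparison.
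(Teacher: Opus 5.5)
Your proposal is correct and follows essentially the same route as the paper: cospectrality from Lemma~\ref{lem:cospectral-family}, equality of characteristic and matching polynomials for trees, the identification $c_{r,\leq 1}=m_r$, and then Corollary~\ref{cor:tree-expansion}. Your extra justification of the sign convention via the permutation expansion of $\det(xI-A)$ is a welcome detail that the paper leaves to the citation.
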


\begin{proof}
For a tree $T$, the coefficient $c_{r,\leq 1}(T)$ counts the $r$-edge matchings of $T$; equivalently, these numbers are the coefficients of the matching polynomial of $T$. Since $X_a$ and $Y_a$ are trees, their characteristic polynomials agree with their matching polynomials \cite[Chapter~2]{God93}. Lemma~\ref{lem:cospectral-family} therefore implies that the matching polynomials of $X_a$ and $Y_a$ are equal, and hence so are the numbers $c_{r,\leq 1}(X_a)$ and $c_{r,\leq 1}(Y_a)$ for all $r$.

The final statement now follows from Corollary~\ref{cor:tree-expansion}.
\end{proof}

\begin{thm}\label{thm:infinite-family}
For every $a\geq 1$, the trees $X_a$ and $Y_a$ are nonisomorphic and have the same full family of defective chromatic polynomials. In particular, for every integer $n\geq 12$, there exists such a pair on $n$ vertices.
\end{thm}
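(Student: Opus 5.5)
The plan is to assemble Theorem~\ref{thm:infinite-family} from the pieces already established, via Corollary~\ref{cor:tree-expansion}. For any tree $T$, $\chi_d(T;k)$ is determined by the coefficients $c_{r,\leq d}(T)$. So it suffices to show $c_{r,\leq d}(X_a)=c_{r,\leq d}(Y_a)$ for every $d\geq 0$ and every $r$. I would split by the value of $d$, since $X_a$ and $Y_a$ both have maximum degree $3$.

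For $d=0$, the only admissible edge set is $A=\emptyset$. Hence $\chi_0(X_a;k)=\chi_0(Y_a;k)=k(k-1)^{a+10}$, because both are trees on $a+11$ vertices. For $d=1$, Corollary~\ref{cor:matching-family} gives equality of all $c_{r,\leq 1}$. That corollary rests on the cospectrality in Lemma~\ref{lem:cospectral-family} and the equality of characteristic and matching polynomials for trees. For $d=2$, Lemma~\ref{lem:linear-forest-counts} gives equality of all $c_{r,\leq 2}$. For $d\geq 3$, we have $d\geq\Delta(X_a)=\Delta(Y_a)=3$, so every edge subset is admissible. Then $\chi_d(X_a;k)=\chi_d(Y_a;k)=k^{a+11}$, as noted in the introduction.

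Nonisomorphism comes from the invariant noted after Definition~\ref{def:construction}. The multiset of pairwise distances among the three degree-$3$ vertices is $\{1,2,3\}$ for $X_a$, namely $d(u,z_1)=1$, $d(z_1,w)=2$ and $d(u,w)=3$. For $Y_a$ it is $\{1,3,4\}$, namely $d(w,t_1)=1$, $d(u,w)=3$ and $d(u,t_1)=4$. Isomorphisms preserve degrees and distances, so the trees are not isomorphic. Here one should check that attaching the path does not create or destroy degree-$3$ vertices when $a\geq 1$. In $X_a$, $z_1$ goes from degree $2$ to $3$, and the path vertices have degree at most $2$. The same holds for $t_1$ in $Y_a$.

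Finally, $X_a$ has $a+11$ vertices, so letting $a$ range over $a\geq 1$ gives a pair for every $n\geq 12$. The one step with real content is $d=1$, namely the Schwenk-based cospectrality. Since that is already established, the main thing to double-check is that the $d\geq 3$ case uses $\Delta=3$ exactly, and that the degree-$3$ vertex sets used in Lemma~\ref{lem:linear-forest-counts} are correct.
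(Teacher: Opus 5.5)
Your proposal is correct and follows the paper's proof step for step. You get nonisomorphism from the distance multisets $\{1,2,3\}$ versus $\{1,3,4\}$ among the degree-$3$ vertices; then $d=0$ from the tree chromatic polynomial $k(k-1)^{a+10}$, $d=1$ from Corollary~\ref{cor:matching-family}, $d=2$ from Lemma~\ref{lem:linear-forest-counts}, $d\geq 3$ from $\Delta=3$, and $a=n-11$ gives every $n\geq 12$. Your extra check that attaching the path creates exactly one new degree-$3$ vertex is a welcome detail that the paper leaves implicit.
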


\begin{proof}
The trees are nonisomorphic by the discussion following Definition~\ref{def:construction}.

By Lemma~\ref{lem:linear-forest-counts}, the two trees have the same $2$-defective chromatic polynomial. By Corollary~\ref{cor:matching-family}, they have the same $1$-defective chromatic polynomial. Since both are trees on $a+11$ vertices, they also have the same ordinary chromatic polynomial,
\[
\chi_0(X_a;k)=\chi_0(Y_a;k)=k(k-1)^{a+10}.
\]
Finally, both trees have maximum degree $3$, so
\[
\chi_d(X_a;k)=\chi_d(Y_a;k)=k^{a+11}\qquad\text{for all }d\geq 3.
\]
Thus, $\chi_d(X_a;k)=\chi_d(Y_a;k)$ for every $d\geq 0$. Taking $a=n-11$ yields the final statement.
\end{proof}

\begin{rmk}
The trees in the family in Theorem~\ref{thm:infinite-family} are not caterpillars unless $a=1$. However, one can modify the construction to obtain a caterpillar analogue. For each integer $b\geq 1$, let $\widetilde{X}_b$ be the tree obtained from the base caterpillar $C$ in Definition~\ref{def:construction} by identifying $z_1$ with the center of a copy of $K_{1,b}$, and let $\widetilde{Y}_b$ be obtained analogously by identifying $t_1$ with the center of a copy of $K_{1,b}$. Equivalently, $\widetilde{X}_b$ is obtained from $C$ by adjoining $b$ new leaves at $z_1$, while $\widetilde{Y}_b$ is obtained from $C$ by adjoining $b$ new leaves at $t_1$. Thus $\widetilde{X}_b$ and $\widetilde{Y}_b$ are caterpillar trees on $b+11$ vertices. They are nonisomorphic, since the pairwise distance multiset among their vertices of degree at least $3$ is $\{1,2,3\}$ for $\widetilde{X}_b$ and $\{1,3,4\}$ for $\widetilde{Y}_b$.

For illustration, when $b=3$ we obtain the following pair:

\begin{center}
\begin{tikzpicture}[
    scale=0.65,
    thick,
    every node/.style={circle,draw,fill=black,inner sep=2pt}
]
    \node (xv1) at (0,0) {};
    \node (xu)  at (1.2,0) {};
    \node (xz1) at (2.4,0) {};
    \node (xz2) at (3.6,0) {};
    \node (xw)  at (4.8,0) {};
    \node (xt1) at (6.0,0) {};
    \node (xt2) at (7.2,0) {};
    \node (xt3) at (8.4,0) {};
    \node (xt4) at (9.6,0) {};
    \node (xuL) at (1.2,1.2) {};
    \node (xwL) at (4.8,1.2) {};
    \node (xzU) at (1.9,1.2) {};
    \node (xzD) at (2.9,1.2) {};
    \node (xzS) at (2.4,1.2) {};

    \draw (xv1)--(xu)--(xz1)--(xz2)--(xw)--(xt1)--(xt2)--(xt3)--(xt4);
    \draw (xu)--(xuL);
    \draw (xw)--(xwL);
    \draw (xz1)--(xzU);
    \draw (xz1)--(xzD);
    \draw (xz1)--(xzS);

    \node[draw=none,fill=none] at (4.8,-1) {$\widetilde{X}_3$};
    \node (yv1) at (12.5,0) {};
    \node (yu)  at (13.7,0) {};
    \node (yz1) at (14.9,0) {};
    \node (yz2) at (16.1,0) {};
    \node (yw)  at (17.3,0) {};
    \node (yt1) at (18.5,0) {};
    \node (yt2) at (19.7,0) {};
    \node (yt3) at (20.9,0) {};
    \node (yt4) at (22.1,0) {};
    \node (yuL) at (13.7,1.2) {};
    \node (ywL) at (17.3,1.2) {};
    \node (ytU) at (18,1.2) {};
    \node (ytD) at (19,1.2) {};
    \node (ytS) at (18.5,1.2) {};

    \draw (yv1)--(yu)--(yz1)--(yz2)--(yw)--(yt1)--(yt2)--(yt3)--(yt4);
    \draw (yu)--(yuL);
    \draw (yw)--(ywL);
    \draw (yt1)--(ytU);
    \draw (yt1)--(ytD);
    \draw (yt1)--(ytS);

    \node[draw=none,fill=none] at (17.3,-1) {$\widetilde{Y}_3$};
\end{tikzpicture}
\end{center}

The proof that $\widetilde{X}_b$ and $\widetilde{Y}_b$ have the same full family of defective chromatic polynomials is parallel to the proof of Theorem \ref{thm:infinite-family} above. Applying Lemma~\ref{lem:schwenk} with $H=K_{1,b}$, taking the distinguished vertex to be the center, shows $\widetilde{X}_b$ and $\widetilde{Y}_b$ are cospectral, so the matching-polynomial argument of Corollary~\ref{cor:matching-family} yields equality of the $1$-defective chromatic polynomials. The case $d=2$ is an inclusion--exclusion count as in
Lemma~\ref{lem:linear-forest-counts}. In both trees, the attachment vertex has degree $b+2$, is adjacent to exactly one of the two degree-$3$ vertices, and shares no incident edge with the other. Thus the same inclusion--exclusion count applies to $\widetilde{X}_b$ and $\widetilde{Y}_b$. For $d\geq 3$, the degree-$3$ vertices never create a violation of $\Delta(\widetilde{X}_b[A])\leq d$. Hence the only possible obstruction is at the attachment vertex. Counting according to the number $s$ of chosen edges incident with this
vertex gives
\[
c_{r,\leq d}(\widetilde{X}_b)
=
c_{r,\leq d}(\widetilde{Y}_b)
=
\sum_{s=0}^{\min\{d,b+2\}}
\binom{b+2}{s}\binom{8}{r-s}.
\]
Combining these with the case $d=0$, we conclude that $\chi_d(\widetilde{X}_b;k)=\chi_d(\widetilde{Y}_b;k)$ for every $d\geq 0$. In particular, for every $n\geq 12$, taking $b=n-11$ yields a nonisomorphic pair of caterpillars on $n$ vertices with the same full family of defective chromatic polynomials.
\end{rmk}

\begin{rmk} While Theorem~\ref{thm:infinite-family} yields pairs of nonisomorphic trees on $n$ vertices with the same DCP family for every $n\geq 12$, the smaller orders $n=9, 10, 11$ also admit such pairs. By computer search, no such pair exists for $n\leq 8$. For $n=9$, the pair from Example~\ref{ex:nine-vertices} is the unique such pair. For $n=10$, there is also a unique such pair, shown below:

\begin{center}
\begin{tikzpicture}[
    scale=0.8,
    thick,
    every node/.style={
        circle,
        draw,
        fill=black,
        inner sep=2pt
    }
]
    \node (v8) at (1.5,1.2) {};
    \node (v7) at (1.5,0) {};
    \node (v6) at (3,0) {};
    \node (v0) at (4.5,0) {};
    \node (v1) at (6,0) {};
    \node (v2) at (7.5,0) {};
    \node (v3) at (9,0) {};
    \node (v4) at (9,1.2) {};
    \node (v9) at (4.5,1.2) {};
    \node (v5) at (6,1.2) {};

    \draw (v8) -- (v7) -- (v6) -- (v0) -- (v1) -- (v2) -- (v3) -- (v4);
    \draw (v0) -- (v9);
    \draw (v1) -- (v5);
\end{tikzpicture}
\hspace{1.7cm}
\begin{tikzpicture}[
    scale=0.8,
    thick,
    every node/.style={
        circle,
        draw,
        fill=black,
        inner sep=2pt
    }
]
    \node (v9) at (1.5,1.2) {};
    \node (v8) at (1.5,0) {};
    \node (v7) at (3,0) {};
    \node (v0) at (4.5,0) {};
    \node (v1) at (6,0) {};
    \node (v2) at (7.5,0) {};
    \node (v3) at (7.9,1.2) {};
    \node (v5) at (6,1.2) {};
    \node (v6) at (6,2.4) {};
    \node (v4) at (7.1,1.2) {};

    \draw (v9) -- (v8) -- (v7) -- (v0) -- (v1) -- (v2) -- (v3);
    \draw (v1) -- (v5) -- (v6);
    \draw (v2) -- (v4);
\end{tikzpicture}
\end{center}

For $n=11$, there are exactly six such pairs; one of them is shown below:

\begin{center}
\begin{tikzpicture}[scale=0.8,
    thick,
    every node/.style={circle,draw,fill=black,inner sep=2pt}]
    \node (v2) at (0,0) {};
    \node (v1) at (2.15,0) {};
    \node (v0) at (4.30,0) {};
    \node (v7) at (6.45,0) {};
    \node (v8) at (8.60,0) {};
    \node (v3)  at (-0.40,1.05) {};
    \node (v4)  at (0.40,1.05) {};
    \node (v5)  at (1.75,1.10) {};
    \node (v6)  at (2.55,1.10) {};
    \node (v10) at (6.45,1.20) {};
    \node (v9)  at (8.60,1.20) {};
    \draw (v2)--(v1)--(v0)--(v7)--(v8);
    \draw (v2)--(v3);
    \draw (v2)--(v4);
    \draw (v1)--(v5);
    \draw (v1)--(v6);
    \draw (v7)--(v10);
    \draw (v8)--(v9);
\end{tikzpicture}
\hspace{0.8cm}
\begin{tikzpicture}[scale=0.8,
    thick,
    every node/.style={circle,draw,fill=black,inner sep=2pt}]
    \node (v2) at (0,0) {};
    \node (v1) at (2.15,0) {};
    \node (v0) at (4.30,0) {};
    \node (v5) at (6.45,0) {};
    \node (v6) at (8.60,0) {};
    \node (v3)  at (-0.40,1.05) {};
    \node (v4)  at (0.40,1.05) {};
    \node (v10) at (4.30,1.20) {};
    \node (v8)  at (6.05,1.10) {};
    \node (v9)  at (6.85,1.10) {};
    \node (v7)  at (8.60,1.20) {};
    \draw (v2)--(v1)--(v0)--(v5)--(v6);
    \draw (v2)--(v3);
    \draw (v2)--(v4);
    \draw (v0)--(v10);
    \draw (v5)--(v8);
    \draw (v5)--(v9);
    \draw (v6)--(v7);
\end{tikzpicture}
\end{center}
\end{rmk}

\section{Comparison between DCP and other graph invariants}\label{sec:comparison}

In this section, we compare the defective chromatic polynomials for trees with several other graph invariants. Consider the pair of trees of order $9$ from Example~\ref{ex:nine-vertices}. 

\begin{center}
\begin{tikzpicture}[
    scale=0.8,
    thick,
    every node/.style={circle,draw,fill=black,inner sep=2pt}
]
    \node (v3) at (1.5,1.2) {};
    \node (v2) at (1.5,0) {};
    \node (v1) at (3,0) {};
    \node (v0) at (4.5,0) {};
    \node (v4) at (6,0) {};
    \node (v6) at (6.25,1.2) {};
    \node (v8) at (4.25,1.2) {};
    \node (v7) at (4.75,1.2) {};
    \node (v5) at (5.75,1.2) {};

    \draw (v3)--(v2)--(v1)--(v0)--(v4)--(v6);
    \draw (v0)--(v7);
    \draw (v0)--(v8);
    \draw (v4)--(v5);

    \node[draw=none,fill=none] at (0,0.6) {$T_1$};
\end{tikzpicture}
\hspace{1.2cm}
\begin{tikzpicture}[
    scale=0.8,
    thick,
    every node/.style={circle,draw,fill=black,inner sep=2pt}
]
    \node (v8) at (1.5,1.2) {};
    \node (v7) at (1.5,0) {};
    \node (v0) at (3,0) {};
    \node (v1) at (4.5,0) {};
    \node (v2) at (4.5,1.2) {};
    \node (v5) at (3,1.2) {};
    \node (v6) at (3,2.4) {};
    \node (v4) at (4.0,1.2) {};
    \node (v3) at (5,1.2) {};

    \draw (v8)--(v7)--(v0)--(v1)--(v2);
    \draw (v0)--(v5)--(v6);
    \draw (v1)--(v3);
    \draw (v1)--(v4);

    \node[draw=none,fill=none] at (0,0.6) {$T_2$};
\end{tikzpicture}
\end{center}

The two trees $T_1$ and $T_2$ share the same full family of defective chromatic polynomials. We now exhibit several graph invariants that distinguish $T_1$ from $T_2$, showing that these invariants are not determined by the DCP family. All numerical data below were computed in SageMath.

First, $T_1$ has diameter $5$, while $T_2$ has diameter $4$. Similarly, $T_1$ has radius $3$, while $T_2$ has radius $2$. Hence, DCP does not determine these extremal distance-based invariants.

Recall that the \emph{independence polynomial} of a graph $G$ is the generating function
\[
I(G,x)=\sum_{j\ge 0} s_j x^j,
\]
where $s_j$ denotes the number of independent sets of $G$ of size $j$. We have 
\begin{align*}
I(T_1, x)&=x^6 + 8x^5 + 26x^4 + 39x^3 + 28x^2 + 9x + 1 \\
I(T_2, x)&=x^6 + 9x^5 + 26x^4 + 39x^3 + 28x^2 + 9x + 1
\end{align*}
Note that $T_1$ and $T_2$ are cospectral, since they have the same $1$-defective chromatic polynomial; we therefore turn to a different spectral invariant. The \emph{Laplacian matrix} of a graph $T$ is $L(T)=D(T)-A(T)$, where $D(T)$ is the diagonal degree matrix and $A(T)$ is the adjacency matrix. The \emph{Laplacian polynomial} of $T$ is the characteristic polynomial of $L(T)$:
\[
\phi_L(T,x)=\det(xI-L(T)).
\]
The two trees $T_1$ and $T_2$ have different Laplacian polynomials:
\begin{align*}
\phi_{L}(T_1, x) &= x^9 - 16x^8 + 101x^7 - 326x^6 + 584x^5 - 592x^4 + 329x^3 - 90x^2 + 9x, \\
\phi_{L}(T_2, x) &= x^9 - 16x^8 + 101x^7 - 326x^6 + 584x^5 - 590x^4 + 325x^3 - 88x^2 + 9x.
\end{align*}
Following Dohmen, P\"onitz, and Tittmann \cite{DPT03}, the \emph{DPT polynomial} $P(G;x,y)$ counts vertex colorings with $x$ colors in which the first $y$ colors are proper and the remaining $x-y$ colors may be repeated on adjacent vertices:
\[
P(G;x,y)=\sum_{X\subseteq V(G)} (x-y)^{|X|}\,\chi(G-X;y).
\]
In particular, $P(G;x,x)$ is the ordinary chromatic polynomial $\chi_0(G; x)$. 

The following pair $T_3$ and $T_4$ shows that the DPT polynomial does not determine the defective chromatic polynomials of a tree. 

\begin{center}
\begin{tikzpicture}[
    scale=0.8,
    thick,
    every node/.style={circle,draw,fill=black,inner sep=2pt}
]
    \node (v8) at (1.5,1.2) {};
    \node (v7) at (1.5,0) {};
    \node (v6) at (3,0) {};
    \node (v0) at (4.5,0) {};
    \node (v1) at (6,0) {};
    \node (v2) at (7.5,0) {};
    \node (v3) at (7.75,1.2) {};
    \node (v9)  at (4.25,1.2) {};
    \node (v10) at (4.75,1.2) {};
    \node (v5)  at (6,1.2) {};
    \node (v4)  at (7.25,1.2) {};
    \draw (v8)--(v7)--(v6)--(v0)--(v1)--(v2)--(v3);
    \draw (v0)--(v9);
    \draw (v0)--(v10);
    \draw (v1)--(v5);
    \draw (v2)--(v4);
    \node[draw=none,fill=none] at (-0.1,0.6) {$T_3$};
\end{tikzpicture}
\hspace{1.5cm}
\begin{tikzpicture}[
    scale=0.8,
    thick,
    every node/.style={circle,draw,fill=black,inner sep=2pt}
]
    \node (v7) at (1.25,1.2) {};
    \node (v5) at (1.5,0) {};
    \node (v0) at (3,0) {};
    \node (v1) at (4.5,0) {};
    \node (v2) at (6,0) {};
    \node (v3) at (6.25,1.2) {};
    \node (v6) at (1.75,1.2) {};
    \node (v10) at (2.75,1.2) {};
    \node (v8) at (3.25,1.2) {};
    \node (v9) at (3.25,2.4) {};
    \node (v4) at (5.75,1.2) {};
    \draw (v7)--(v5)--(v0)--(v1)--(v2)--(v3);
    \draw (v5)--(v6);
    \draw (v0)--(v10);
    \draw (v0)--(v8)--(v9);
    \draw (v2)--(v4);
    \node[draw=none,fill=none] at (-0.1,0.6) {$T_4$};
\end{tikzpicture}
\end{center}

Indeed, $T_3$ and $T_4$ have the same DPT polynomial, but different $2$-defective chromatic polynomials:
\begin{align*}
\chi_2(T_3;k)&=k^{11}-6k^8+3k^7+4k^6+2k^5-6k^4+2k^3, \\ 
\chi_2(T_4;k) &=k^{11}-6k^8+3k^7+3k^6+3k^5-3k^4-3k^3+2k^2.
\end{align*}
Next, we consider another polynomial invariant of a tree $T=(V, E)$. For a vertex subset $A\subseteq V$, define
\[
E(A)=\{\text{edges of }E\text{ with both endpoints in }A\}, \qquad e(A)=|E(A)|,
\]
\[
D(A)=\{\text{edges of }E\text{ with exactly one endpoint in }A\}, \qquad d(A)=|D(A)|.
\]
Also define
\[
g_T(a,b,c)=\bigl|\{A\subseteq V(T): |A|=a,\ d(A)=b,\ e(A)=c\}\bigr|.
\]
The \emph{generalized degree polynomial} of $T$ is
\[
G_T=G_T(x,y,z)=\sum_{A\subseteq V} x^{|A|}y^{d(A)}z^{e(A)}
= \sum_{a,b,c} g_T(a,b,c)x^a y^b z^c.
\]
The GDP was first introduced by Crew \cite{Cre20}, and further studied in \cite{Cre22}. Aliste-Prieto, Martin, Wagner, and Zamora \cite{A-PMWZ24} proved Crew's conjecture \cite{Cre22}: the chromatic symmetric function (CSF) of a tree $T$ determines the polynomial $G_T(x, y, z)$. On the other hand, Liu and Tang \cite{LT26} showed that the GDP of a tree determines its double-degree sequence, leaf-adjacency sequence, and the component-size multiset of $T_{(2)}$, where $T_{(2)}$ is the induced subgraph on the degree-2 vertices.

The two $9$-vertex trees $T_1$ and $T_2$ have the same DCP family but different GDPs. Conversely, the trees $T_5$ and $T_6$ below have the same GDP but different $2$-defective chromatic polynomials, and therefore different DCP families.

\begin{center}
\begin{tikzpicture}[scale=0.8,
    thick,
    every node/.style={circle,draw,fill=black,inner sep=2pt}]
    \node (v2)  at (0,0) {};
    \node (v1)  at (2.15,0) {};
    \node (v0)  at (4.30,0) {};
    \node (v10) at (6.45,0) {};
    \node (v11) at (8.60,0) {};
    \node (v3) at (-0.50,1.00) {};
    \node (v4) at (0,1.25) {};
    \node (v5) at (0.50,1.00) {};
    \node (v6) at (1.35,1.00) {};
    \node (v7) at (1.80,1.20) {};
    \node (v8) at (2.50,1.20) {};
    \node (v9) at (2.95,1.00) {};
    \node (v16) at (4.30,1.25) {};
    \node (v14) at (5.95,1.00) {};
    \node (v15) at (6.95,1.00) {};
    \node (v12) at (8.10,1.00) {};
    \node (v13) at (9.10,1.00) {};
    \draw (v2)--(v1)--(v0)--(v10)--(v11);
    \draw (v2)--(v3);
    \draw (v2)--(v4);
    \draw (v2)--(v5);
    \draw (v1)--(v6);
    \draw (v1)--(v7);
    \draw (v1)--(v8);
    \draw (v1)--(v9);
    \draw (v0)--(v16);
    \draw (v10)--(v14);
    \draw (v10)--(v15);
    \draw (v11)--(v12);
    \draw (v11)--(v13);
    \node[draw=none,fill=none] at (-2,0.5) {$T_5$};
\end{tikzpicture}

\vspace{0.5cm}

\begin{tikzpicture}[scale=0.8,
    thick,
    every node/.style={circle,draw,fill=black,inner sep=2pt}]
    \node (v2) at (0,0) {};
    \node (v1) at (2.15,0) {};
    \node (v0) at (4.30,0) {};
    \node (v7) at (6.45,0) {};
    \node (v8) at (8.60,0) {};
    \node (v3) at (-0.50,1.00) {};
    \node (v4) at (0,1.25) {};
    \node (v5) at (0.50,1.00) {};
    \node (v6) at (2.15,1.20) {};
    \node (v15) at (3.80,1.00) {};
    \node (v16) at (4.80,1.00) {};
    \node (v11) at (5.60,1.00) {};
    \node (v12) at (6.05,1.20) {};
    \node (v13) at (6.85,1.20) {};
    \node (v14) at (7.30,1.00) {};
    \node (v9)  at (8.10,1.00) {};
    \node (v10) at (9.10,1.00) {};
    \draw (v2)--(v1)--(v0)--(v7)--(v8);
    \draw (v2)--(v3);
    \draw (v2)--(v4);
    \draw (v2)--(v5);
    \draw (v1)--(v6);
    \draw (v0)--(v15);
    \draw (v0)--(v16);
    \draw (v7)--(v11);
    \draw (v7)--(v12);
    \draw (v7)--(v13);
    \draw (v7)--(v14);
    \draw (v8)--(v9);
    \draw (v8)--(v10);
    \node[draw=none,fill=none] at (-2,0.5) {$T_6$};
\end{tikzpicture}
\end{center}

Taken together, the pairs $(T_1,T_2)$, $(T_3,T_4)$, and $(T_5,T_6)$ show that, as tree invariants, DCP is incomparable with each of DPT and GDP.

\begin{table}[ht]
\centering
\begin{tabular}{lccc}
\toprule
Invariant & Same $(T_1,T_2)$? & Same $(T_3,T_4)$? & Same $(T_5,T_6)$? \\
\midrule
DCP family & Yes & No & No \\
DPT polynomial & No & Yes & Yes \\
GDP & No & No & Yes \\
Adjacency spectrum & Yes & Yes & Yes \\
Laplacian spectrum & No & No & Yes \\
Independence polynomial & No & Yes & Yes \\
\bottomrule \\ 
\end{tabular}
\caption{Comparison of invariants on three pairs of nonisomorphic trees. A ``Yes'' entry means the invariant agrees on the pair; a ``No'' entry means it distinguishes them.}
\label{tab:invariant-comparison}
\end{table}

We conclude with an open question motivated by the known implication CSF $\Rightarrow$ GDP \cite[Theorem~6]{A-PMWZ24}.

\begin{quest}
Suppose $T_1$ and $T_2$ are two trees with the same chromatic symmetric function. Is it true that $\chi_{d}(T_1;k)=\chi_{d}(T_2;k)$ for all $d\geq 0$?
\end{quest}

\bibliographystyle{alpha}
\bibliography{main}

\end{document}